\documentclass[11pt]{article}
\usepackage[a4paper,margin=2.55cm]{geometry}
\usepackage[T1]{fontenc}
\usepackage[utf8]{inputenc}
\usepackage{lmodern}
\usepackage{microtype}
\usepackage{amsmath,amssymb,amsthm,mathtools}
\usepackage{booktabs,array,tabularx}
\usepackage{enumitem}
\usepackage{xcolor}
\usepackage{hyperref}
\usepackage[nameinlink,capitalise,noabbrev]{cleveref}
\usepackage[none]{hyphenat}
\usepackage{tikz}
\usetikzlibrary{arrows.meta,positioning,calc,decorations.pathreplacing}
\usepackage{float}
\usepackage{algorithm}
\usepackage{algpseudocode}
\usepackage{orcidlink}
\hypersetup{colorlinks=true,linkcolor=blue!55!black,citecolor=blue!55!black,urlcolor=blue!55!black}
\setlist[itemize]{leftmargin=1.5em}
\setlist[enumerate]{leftmargin=1.7em}
\newcolumntype{Y}{>{\raggedright\arraybackslash}X}

\newtheorem{theorem}{Theorem}[section]
\newtheorem{lemma}[theorem]{Lemma}
\newtheorem{proposition}[theorem]{Proposition}
\newtheorem{corollary}[theorem]{Corollary}
\newtheorem{remark}[theorem]{Remark}

\theoremstyle{definition}
\newtheorem{definition}[theorem]{Definition}
\newtheorem{example}[theorem]{Example}
\newtheorem*{theoremA}{Theorem A}
\newtheorem*{theoremB}{Theorem B}
\newtheorem*{theoremC}{Theorem C}
\newtheorem*{theoremD}{Theorem D}

\newcommand{\diam}{\operatorname{diam}}
\newcommand{\diamV}{\operatorname{diam}_V}
\newcommand{\Rips}{\delta_{\triangle}}
\newcommand{\four}{\delta_{4}}

\newcommand{\N}{\mathbb{N}}
\newcommand{\R}{\mathbb{R}}

\newcommand{\cP}{\mathcal{P}}

\title{\textbf{Sharp Gromov Hyperbolicity Constants for Cartesian Products of Finitely Many Trees}}

\begin{document}
\author{{Junior Michel \orcidlink{0009-0005-8450-2757}}$^{1}$,
{Anier Soria-Lorente \orcidlink{0000-0003-3488-3094}}$^{1}$,
{Jean-Marie Vilaire \orcidlink{0000-0002-4012-6471}}$^{2}$ \\
%EndAName
\\
$^{1}$Department of Quantitative Methods, Loyola University,\\
Avda. de las Universidades, 2, Dos Hermanas, Seville,\\
41704, Andalusia, Spain\\
jmichel@uloyola.es,
asoria@uloyola.es\\
$^{2}$Mobile Computing and Networking Research Laboratory (LARIM),\\
Department of Computer and Software Engineering,\\
Polytechnique Montr\'eal, Montr\'eal, QC H3T 1J4, Canada\\
jean-marie.vilaire@polymtl.ca\\
}
\maketitle

\begin{abstract}
Let $X=T_1\times\cdots\times T_k$, $k\ge2$, be a Cartesian product of finite
nontrivial trees with unit edges, and write $d_i=\diam T_i$, $S=\sum_id_i$ and
$M=\max_id_i$. We prove that the thin-triangle hyperbolicity constant of the
geodesic realization of $X$ is
\[
 \Rips(X)=\min\Bigl\{\frac S2,\ S-M+\frac12\Bigr\},
\]
and that the four-point hyperbolicity constant of its vertex metric is the value
\[
\mathcal{P}(d_1,\dots,d_k)
=
\max_{E\subseteq\{1,\dots,k\}}
\min\left\{
\sum_{i\in E} d_i,\,
\sum_{i\notin E} d_i
\right\}.
\]
of the best two-way partition of the diameters. Both constants therefore depend only on the
diameter vector, and their difference is governed by the partition imbalance $\eta$
of that vector: it equals $\eta/2$ when $M\le S-M$ and $1/2$ otherwise. In
particular the two constants coincide if and only if the diameters admit a perfect
partition, and their difference is unbounded. For $k\ge3$ the first formula exhibits
a many-factor balancing phenomenon: several short factors can collectively balance
the longest one and force the universal bound $\frac12\diam X$ to be attained. The
proofs are self-contained and rest on an exact distance formula for the geodesic
realization of $X$. For the $\ell_1$-product of the factors, i.e.\ the
$\ell_1$-metric on the associated cube complex, we obtain
$\Rips=\min\{S/2,S-M\}$, which identifies the half-unit above as a pure
$1$-skeleton effect, and we show that the four-point formula holds for
$\ell_1$-products of arbitrary $0$-hyperbolic metric spaces, in particular for
weighted trees and $\R$-trees. We also treat products with unbounded factors and
record algorithmic consequences.
\end{abstract}

\maketitle

\section{Introduction}

Gromov hyperbolicity measures how closely a geodesic metric space resembles a tree
\cite{GhysHarpe1990,Gromov1987}; it is a basic tool in geometric group theory, in
metric graph theory and in the study of large networks
\cite{ChepoiDraganEstellonHabibVaxes2008,HernandezReynaRosario2020}. For graphs two
conventions are standard. One regards every edge as a copy of $[0,1]$ and measures
the thinness of geodesic triangles in the resulting geodesic metric graph; the other
applies Gromov's four-point condition to the shortest-path metric on the vertex
set. The two constants are quantitatively related, but their sharp values need not
coincide, and computing either of them exactly is difficult even for simple graph
families.

Cartesian products of trees are a natural testing ground. Trees are
$0$-hyperbolic, while their products are median graphs, the $1$-skeleta of
CAT(0) cube complexes; see
\cite{BandeltBuroschLaborde1996,ChalopinChepoiHiraiOsajda2020,Genevois2019}. For the
thin-triangle convention, Michel, Rodr\'iguez, Sigarreta and Villeta
\cite{MichelRodriguezSigarretaVilleta2010} characterized the hyperbolicity of a
Cartesian product of two graphs and computed the constant of a product of two trees;
further bounds and the role of bigons in median graphs are studied in
\cite{RodriguezSigarreta2012,Sigarreta2013}. For the four-point convention, Chepoi,
Dragan, Estellon, Habib and Vax\`es \cite{ChepoiDraganEstellonHabibVaxes2008} related
the hyperbolicity of median graphs to isometrically embedded square grids, and
Coudert and Ducoffe \cite{CoudertDucoffe2016} computed it for products of paths.
Algorithmic aspects are studied in
\cite{ChalopinChepoiDraganDucoffeMohammedVaxes2021,CohenCoudertLancin2015,FournierIsmailVigneron2015}.

None of these results gives the sharp constants of a product of $k\ge3$ arbitrary
trees, nor compares the two conventions on this class. This is what we do here.

\subsection*{Main results}
Throughout, $T_1,\dots,T_k$ are nontrivial trees with unit edges and
\[
 X=T_1\times\cdots\times T_k,\quad d_i=\diam T_i,\quad S=\sum_{i=1}^kd_i,\quad
 M=\max_id_i,\quad R=S-M .
\]
$\Rips$ is the sharp thin-triangle constant of the geodesic realization and $\four$
the four-point constant (\cref{sec:prelim}); for positive numbers $d_1,\dots,d_k$ we
put
\[
 \cP(d_1,\dots,d_k)=\max_{E\subseteq\{1,\dots,k\}}
 \min\Bigl\{\sum_{i\in E}d_i,\ \sum_{i\notin E}d_i\Bigr\},\quad
 \eta(d_1,\dots,d_k)=\min_{E}\Bigl|\sum_{i\in E}d_i-\sum_{i\notin E}d_i\Bigr|,
\]
so that $\cP=(S-\eta)/2$ is the largest subset sum not exceeding $S/2$.

\begin{theoremA}[\cref{thm:mainrips}]
If the trees are finite and $k\ge2$, then
\[
 \Rips(X)=\min\Bigl\{\frac S2,\ S-M+\frac12\Bigr\}
 =\begin{cases} S/2, & M\le R,\\ R+1/2, & M>R,\end{cases}
\]
and the value is attained by an explicit geodesic bigon.
\end{theoremA}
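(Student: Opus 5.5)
We prove two inequalities and match them with an explicit bigon. Throughout we use the exact distance formula for the geodesic realization of $X$, in the form we expect it to hold: a point $p$ of the realization lies in the interior of an edge that moves one coordinate $j$. For a point $p$ on an edge in direction $j$ and a point $q$ on an edge in direction $l$, the distance $d(p,q)$ is the sum $\sum_i d_{T_i}(p_i,q_i)$ of coordinatewise tree distances. This holds with one correction: when $j=l$ and $p,q$ lie on parallel but distinct copies of the same edge, we add $2\min\{t,1-t\}$-type detour terms, because one cannot travel inside the interior of a square.

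\textbf{Upper bound.} First, $\Rips\le \tfrac12\diam X=S/2$ holds universally. Indeed, for a geodesic triangle and a point $p$ on one side $[xy]$, the distance from $p$ to the other two sides is at most $\min\{d(p,x),d(p,y)\}\le d(x,y)/2$.

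For the second bound we show $\Rips\le R+\tfrac12$ when $M>R$. Let $T_1$ have diameter $M$, and project onto the factor $Y=T_2\times\cdots\times T_k$, which has diameter $R$. Take $p$ on a side $[xy]$ of a geodesic triangle. Its $T_1$-coordinate lies on the tree geodesic $[x_1y_1]$, which is a tripod point-set covered by the $T_1$-projections of the other two sides, since those projections are connected paths. Hence some point $q$ on $[xz]\cup[zy]$ has $q_1=p_1$, or has $q_1$ within $\tfrac12$ of $p_1$ in the case where $p$ sits mid-edge in another direction. Moving inside the $Y$-fibre then costs at most $R$. The extra $\tfrac12$ comes from the edge-interior correction, and we bound it carefully by choosing $q$ at a vertex whenever possible. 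We expect this careful case analysis of half-edge positions to be the main obstacle: it requires checking that the correction never adds more than $\tfrac12$ in total, rather than $\tfrac12$ per coordinate.

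\textbf{Lower bound.} We construct a bigon. Pick diametral paths $a_i\to b_i$ in each $T_i$, so that the problem reduces to the grid $\prod_i[0,d_i]$, which is isometrically and convexly embedded in $X$.

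When $M\le R$, split the indices into two groups whose diameter sums $A,B$ are as balanced as possible; since $M\le R$, we can order the coordinates so that a suitable staircase works. Take $x=(0,\dots,0)$ and $y=(d_1,\dots,d_k)$. Let $\gamma_1$ first traverse the coordinates of one group and then the other, and let $\gamma_2$ do the reverse. The midpoint $p$ of $\gamma_1$, at distance $S/2$ from both ends, then has distance $S/2$ to $\gamma_2$. Computing this uses the $\ell_1$ formula and the fact that the points of $\gamma_2$ have a monotone coordinate structure. If no subset sums exactly to $S/2$, we place $p$ mid-edge inside the coordinate where the split occurs, which is exactly why the realization, rather than the vertex set, attains $S/2$ without a parity loss. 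We verify this case with the distance formula's edge correction.

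When $M>R$, take $\gamma_1$ to move along the $Y$-coordinates, then along $T_1$, and then back. More precisely, use the bigon with $x=(0,0)$ and $y=(M,\text{far }Y\text{ corner})$. Let $\gamma_1$ traverse $Y$ first and then $T_1$, and let $\gamma_2$ traverse $T_1$ first and then $Y$. Choose $p$ on $\gamma_1$ at the point of $T_1$-parameter $R+\tfrac12$ (possible since $M\ge R+1$), placed in the interior of a $T_1$-edge. Then $d(p,\gamma_2)=R+\tfrac12$: a vertex of $\gamma_2$ is at $\ell_1$ distance at least $R$ from $p$ in the $Y$ part, plus $\tfrac12$ from the half-edge, and the distance formula's correction rules out any shortcut. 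Combining the two constructions with the upper bounds gives $\min\{S/2,R+\tfrac12\}$, since the two values are ordered according to whether $M\le R$ or $M>R$.
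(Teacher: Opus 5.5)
Your overall plan matches the paper's: the universal bound $S/2$, a projection-to-a-factor bound $R+\frac12$, and two staircase bigons. However, the dominant bigon you specify fails, and the other two steps are not actually carried out.

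\textbf{Dominant bigon.} Your $\gamma_1$ moves the $Y$-coordinates first, so its point with $T_1$-parameter $R+\frac12$ is $p=(R+\frac12,b_Y)$, where $a_Y,b_Y$ are the $Y$-coordinates of $x,y$. This point is at distance $M-R-\frac12$ from the common endpoint $y\in\gamma_2$. For $P_3\times P_2$ that distance is $\frac12$, not $\frac32$. Your justification misses exactly the points of $\gamma_2$ in its $Y$-phase, whose $Y$-distance to $p$ decreases to $0$. You must instead take $p=(R+\frac12,a_Y)$ on $\gamma_2$, the path that moves $T_1$ first. Then a point $q$ of $\gamma_1$ falls into one of two cases:
\begin{itemize}
\item $q_1=a_1$, and the $T_1$-coordinate alone gives $d(p,q)\ge R+\frac12$;
\item $q$ has $Y$-coordinate $b_Y$, so $q$ is off the edge of $p$. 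Every path from $p$ to $q$ leaves that edge through an endpoint (cost $\frac12$) and then moves the $Y$-coordinates by $R$.
\end{itemize}

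\textbf{Balanced bigon.} Here the balanced partition is a red herring. If ``the reverse'' means traversing the two blocks in the opposite order with internal orders unchanged, the construction fails. Take $(2,2,2)$ with blocks $\{1\},\{2,3\}$ and orders $(1,2,3)$, $(2,3,1)$: the midpoint $(2,1,0)$ of $\gamma_1$ is at distance $2<3$ from $(0,1,0)\in\gamma_2$. You cannot ``place $p$ mid-edge'', because the only point of $\gamma_1$ at distance $S/2$ from both ends is its midpoint. What works is to traverse the coordinates in exactly opposite orders, for any ordering. The case you must then check is $q\in\gamma_2$ lying over the same coordinate $j$ as $p$, where $\sum_i d_{T_i}(p_i,q_i)\ge S-d_j\ge R\ge S/2$. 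This, not a balanced split, is where $M\le R$ enters.

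\textbf{Upper bound.} You stop at $R+\frac12$, calling it the main obstacle, but it is the real content of the proof.
\begin{itemize}
\item Your first step, $p_1\in[x_1,y_1]$, can fail when $x,y$ lie in the interiors of two distinct edges of $X$ over the same edge of $T_1$, since a geodesic then detours through an endpoint of that edge. This case needs a separate argument: $d(x,y)\le R+1$, so $p$ is within $R+\frac12$ of $x$ or $y$.
\item The accumulation of corrections you fear does not occur. Two points form such an exceptional pair in at most one coordinate $m$, and there that coordinate contributes $\min\{s+t,2-s-t\}\le 1\le d_m$ in total. Hence $d(p,w)\le d_{T_1}(p_1,w_1)+R$ whenever $p,w$ are not exceptional in coordinate $1$. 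The same observation gives $\diam X=S$, which your $S/2$ bound silently uses.
\item If $p_1$ is a vertex, take $w$ on the other sides with $w_1=p_1$; then $d(p,w)\le R$.
\item If $p_1$ is interior to an edge $[t_0,t_1]\subseteq[x_1,y_1]$, do not take $q$ with $q_1=p_1$: that pair can be exceptional and cost up to $R+1$. Take points of the other sides over $t_0$ and $t_1$; one of them is within $R+\frac12$.
\item If $p_1$ lies in an edge not contained in $[x_1,y_1]$, monotonicity of $\pi_1$ along $[xy]$ forces $p$ onto the edge of $X$ containing $x$ or $y$, at distance $<1\le R$.
\end{itemize}
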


For $k=2$ this agrees with the two-tree formula of
\cite{MichelRodriguezSigarretaVilleta2010}. The new phenomenon occurs for $k\ge3$ in
the \emph{balanced regime} $M\le R$: the universal bound
$\Rips(X)\le\frac12\diam X$ is attained by a bigon whose sides traverse diametral
geodesics of the factors in opposite coordinate orders, so that several short
factors collectively balance the longest one.

\begin{theoremB}[\cref{thm:fourpoint,cor:fourtrees}]
If the trees are finite, then $\four(V(X))=\cP(d_1,\dots,d_k)$. More generally, the
four-point constant of the $\ell_1$-product of metric spaces $Y_1,\dots,Y_k$ with
$\four(Y_i)=0$ and finite diameters $D_i$ is $\cP(D_1,\dots,D_k)$.
\end{theoremB}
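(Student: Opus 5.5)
We prove the general statement directly: $\four$ of the $\ell_1$-product $Y=Y_1\times\cdots\times Y_k$ of $0$-hyperbolic spaces with diameters $D_i$ equals $\cP(D_1,\dots,D_k)$. The tree case then follows at once. With unit edges, the shortest-path metric on $V(X)$ is exactly the $\ell_1$-sum of the vertex metrics of the $T_i$, each tree's vertex metric is $0$-hyperbolic, and $D_i=d_i$.

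We use the convention of \cref{sec:prelim}. For four points, let $\Sigma_1\ge\Sigma_2\ge\Sigma_3$ be the three pair sums $d(x,y)+d(z,w)$, $d(x,z)+d(y,w)$, $d(x,w)+d(y,z)$. Then $\four$ is the supremum of $(\Sigma_1-\Sigma_2)/2$. (This normalisation gives $d$ for the corners of a $d\times d$ grid, consistent with $\cP(d,d)=d$.)

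\emph{Upper bound.} Fix four points of $Y$ and call the three perfect matchings $A,B,C$, labelled so that the total sum $s(A)$ is largest. The $\ell_1$ structure gives $s(\cdot)=\sum_i s_i(\cdot)$, where $s_i$ is the corresponding pair sum of the coordinate projections in $Y_i$. Put $x_i=s_i(A)-s_i(B)$ and $y_i=s_i(A)-s_i(C)$, so that $2\cdot(\text{excess})=\min\{\sum_i x_i,\sum_i y_i\}$.
\begin{enumerate}
\item Since $Y_i$ is $0$-hyperbolic, the largest of $s_i(A),s_i(B),s_i(C)$ is attained at least twice. If $A$ is not a maximiser, then $B$ and $C$ both are, so $x_i,y_i\le 0$. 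If $A$ is a maximiser, then $B$ or $C$ ties with it. In either case $\min\{x_i,y_i\}\le 0$.
\item The triangle inequality gives $x_i,y_i\le 2D_i$. For instance, $d(a,b)+d(c,e)-d(a,c)-d(b,e)\le 2d(b,c)$.
\end{enumerate}
Let $E=\{i: y_i>0\}$. By step 1, $x_i\le 0$ for $i\in E$. Hence $\sum_i x_i\le 2\sum_{i\notin E}D_i$ and $\sum_i y_i\le 2\sum_{i\in E}D_i$. Therefore the excess is at most $\min\{\sum_{E}D_i,\sum_{E^c}D_i\}\le\cP$.

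\emph{Lower bound.} Choose $E$ attaining $\cP$ and $\varepsilon>0$. In each factor pick $p_i,q_i$ with $d(p_i,q_i)\ge D_i-\varepsilon$; for finite trees take a diametral pair, so $\varepsilon=0$. Define four points:
\begin{itemize}
\item $x=(p_i)_i$ and $y=(q_i)_i$;
\item $z$ with coordinates $p_i$ for $i\in E$ and $q_i$ for $i\notin E$;
\item $w$, the complementary mixture.
\end{itemize}
The $\ell_1$ formula gives $d(x,y)=d(z,w)=\sum_i d(p_i,q_i)$. It also gives $d(x,z)=d(y,w)=\sum_{E^c}d(p_i,q_i)$ and $d(x,w)=d(y,z)=\sum_{E}d(p_i,q_i)$. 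The excess is therefore $\min\{\sum_E d(p_i,q_i),\sum_{E^c}d(p_i,q_i)\}\ge \cP-k\varepsilon$. Letting $\varepsilon\to0$ gives $\four(Y)\ge\cP$.

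The only step needing care is step 1, the per-coordinate sign observation $\min\{x_i,y_i\}\le0$. It is what converts the global excess into a two-way partition of the factors via the set $E$. Everything else is the additivity of pair sums under the $\ell_1$-metric.
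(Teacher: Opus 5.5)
Your proof is correct and follows essentially the same route as the paper. Both arguments rest on the same ingredients:
\begin{itemize}
\item additivity of the three pair sums over the $\ell_1$-factors;
\item the per-factor fact that $0$-hyperbolicity leaves at most one strictly smaller matching, with deficit at most $2D_i$;
\item a partition of the factors according to which matching is deficient;
\item for the lower bound, the same four-corner quadruple as \eqref{eq:quadruple}, up to relabelling the points.
\end{itemize}
The only difference is bookkeeping. The paper writes each factor's sums as $\lambda_i,\lambda_i,\lambda_i-2\varepsilon_i$, sorts the total deficits $e_1\le e_2\le e_3$, and takes $E$ to be the middle type. You measure differences from the globally largest matching and take $E=\{i:y_i>0\}$, which is the set of factors where $C$ is strictly deficient; the two are equivalent.
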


For products of paths this is the grid formula of \cite{CoudertDucoffe2016}. In
particular the branching of the factors is invisible to \emph{both} sharp constants
(\cref{cor:diamonly}), and Theorem~B applies verbatim to weighted trees and to
$\R$-trees.

\begin{theoremC}[\cref{thm:exactgap}]
If the trees are finite and $k\ge2$, then
\[
 \Rips(X)-\four(V(X))=\begin{cases}\eta/2, & M\le R,\\ 1/2, & M>R.\end{cases}
\]
Hence $\Rips(X)=\four(V(X))$ if and only if $\eta=0$, and for every odd $k\ge3$ the
difference is unbounded.
\end{theoremC}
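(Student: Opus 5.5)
Theorem~C follows from Theorems~A and~B together with the identity $\cP=(S-\eta)/2$ recorded in the introduction. By Theorem~A (\cref{thm:mainrips}) and Theorem~B (\cref{thm:fourpoint,cor:fourtrees}), for finite trees and $k\ge2$,
\[
\Rips(X)-\four(V(X))=\min\Bigl\{\frac S2,\ R+\frac12\Bigr\}-\frac{S-\eta}{2}.
\]
We then split into the two regimes of Theorem~A.

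\emph{Balanced regime $M\le R$.} Here $\Rips(X)=S/2$, so the difference is $S/2-(S-\eta)/2=\eta/2$.

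\emph{Unbalanced regime $M>R$.} First we identify $\eta$. For any $E$, the part containing an index $i_0$ with $d_{i_0}=M$ has sum at least $M$, and the other part has sum at most $R$. Hence every difference is at least $M-R$, and this value is attained by $E=\{i_0\}$. So $\eta=M-R$ and $\cP=R$. Since $\Rips(X)=R+\tfrac12$, the difference is $\tfrac12$.

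\emph{Equality case.} If $M>R$ the gap is $\tfrac12\ne0$, and in this regime also $\eta=M-R>0$. If $M\le R$ the gap is $\eta/2$, which vanishes iff $\eta=0$. Conversely, $\eta=0$ forces $M\le S-M$ (take the part containing the maximum), so we are in the balanced regime and the gap is $0$. Therefore $\Rips(X)=\four(V(X))$ iff $\eta=0$.

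\emph{Unboundedness for odd $k\ge3$.} We need a family in the balanced regime with $\eta\to\infty$. Take paths with $d_1=\dots=d_k=n$, where $k$ is odd. Then $M=n\le (k-1)n=R$, and any subset sum is $jn$, so the difference between the two parts is $|2j-k|\,n\ge n$ because $k$ is odd. Hence $\eta=n$ and the gap is $n/2\to\infty$. (Perturbing a single $d_i$ also shows that even $k\ge4$ works, but only odd $k$ is claimed.)

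The only step needing any care is the unbalanced computation $\eta=M-R$. It is a one-line observation, so the whole proof is essentially bookkeeping on top of Theorems~A and~B.
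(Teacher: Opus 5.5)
Your proposal is correct and follows essentially the same route as the paper. Like the paper, you subtract Theorem~B, in the form $\cP=(S-\eta)/2$, from Theorem~A, identify $\eta=M-R$ in the dominant regime by putting the maximal diameter on one side, and obtain unboundedness from $k$ equal diameters with $k$ odd. The only difference is cosmetic: the paper takes trees of diameter $2D$ so that the gap is the integer $D$, whereas you take diameter $n$ and get a gap of $n/2$.
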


The half-unit in Theorems A and C comes from the edges of $X$. To make this precise,
let $|X|_1$ denote the $\ell_1$-product of the geodesic realizations of the
factors; it is the cube complex of $X$ (all cubes filled in) with its
$\ell_1$-metric, and $X$ is its $1$-skeleton.

\begin{theoremD}[\cref{thm:l1}]
If the trees are finite and $k\ge2$, then
\[
 \Rips(|X|_1)=\min\Bigl\{\frac S2,\ S-M\Bigr\},\quad \four(|X|_1)=\cP(d_1,\dots,d_k).
\]
The same holds for $\ell_1$-products of compact $\R$-trees of diameters $d_i$.
\end{theoremD}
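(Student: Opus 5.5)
We have to prove Theorem D: for $|X|_1$, the $\ell_1$-product of the geodesic realizations $|T_i|$, we have $\Rips(|X|_1)=\min\{S/2,S-M\}$ and $\four(|X|_1)=\cP(d_1,\dots,d_k)$. The same should hold for compact $\R$-trees.

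\textbf{Four-point constant.} Each $|T_i|$ is an $\R$-tree, so $\four(|T_i|)=0$, and its diameter is $d_i$. The formula $\four(|X|_1)=\cP(d_1,\dots,d_k)$ is then the general $\ell_1$-product statement of Theorem~B. For completeness I record the mechanism. The upper bound follows by summing, coordinate by coordinate, the four-point inequality of each factor; each coordinate contributes an excess bounded by its diameter, and the contributions split into two groups. The lower bound takes four points that are, in the coordinates of an optimal set $E$, the two endpoints of a diametral segment, paired crosswise with the complementary coordinates, as in the grid construction. The compact $\R$-tree case is covered by the same statement.

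\textbf{Upper bound for $\Rips$.} The bound $\Rips\le\frac12\diam=S/2$ holds in any geodesic space. For $\Rips\le S-M$, fix an index $j$ with $d_j=M$. Take a point $p$ on a side $[x,y]$ of a geodesic triangle and project it to the factor $|T_j|$. Geodesics in an $\ell_1$-product have monotone coordinates in each factor, so projecting sends each geodesic to a geodesic of $|T_j|$. Since $|T_j|$ is $0$-hyperbolic, there is a point $q$ on another side whose $j$-th coordinate equals $p_j$.

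The key step is to choose $q$ by continuity (intermediate value along the side) so that its $j$-coordinate matches $p_j$. Then
\[
d(p,q)=\sum_{i\ne j}d_i(p_i,q_i)\le\sum_{i\ne j}d_i=S-M .
\]
The subtle point is that $0$-thinness in the tree yields a point on one of the two other sides, whose $j$-projection passes through $p_j$. This holds because the projected triangle is a tripod: its projected sides cover one another. For a bigon the argument is immediate.

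\textbf{Lower bound for $\Rips$.} I adapt the explicit bigon used for Theorem~A, taking the sides to traverse diametral geodesics in opposite coordinate orders. In the balanced regime $M\le S-M$, order the factors so that some prefix sum straddles $S/2$. Because the cube complex is a continuous space, the midpoint can be placed exactly at distance $S/2$ from the other side, with no half-integer loss. This is precisely why the edge effect of $+\frac12$ disappears.

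In the regime $M>S-M$, take one side going through factor $j$ first and then the others, and the other side going through the others first and then $j$. The point of the first side where the $j$-coordinate is at distance $\frac12(S-M)$ beyond the start, and symmetrically, should realize distance $S-M$ to the other side. The computation uses the distance formula in an $\ell_1$-product together with tree distances in each factor.

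\textbf{Main obstacle.} The hardest step is proving the exact distance from the chosen point to the whole opposite side. This means minimizing
\[
\sum_i d_{T_i}(p_i,\gamma_i(t))
\]
over $t$, where the coordinates $\gamma_i(t)$ move piecewise linearly. I would handle it by observing that the function is convex and piecewise linear in $t$ and checking its slopes at the breakpoints. Compact $\R$-trees with diameter $d_i$ contain diametral segments, at least approximately and exactly by compactness, so the same constructions carry over verbatim.
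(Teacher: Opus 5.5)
Your four-point argument (via \cref{thm:fourpoint}) and your upper bound $\Rips\le S-M$ (monotone projections, then a point of the other two sides with the same $j$-th coordinate) are the paper's. So is your balanced lower bound: the bigon of \cref{lem:balancedbigon}, with any coordinate order and its reverse, works verbatim in $|X|_1$, and no reordering is needed.

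The gap is in the dominant regime $M>R=S-M$. Take $d_1=M$, let $\gamma^-$ traverse the coordinates in the order $1,2,\dots,k$ and $\gamma^+$ in the order $2,\dots,k,1$. Let $p\in\gamma^-$ have $\pi_1(p)=\alpha_1(\tau)$ and $\pi_i(p)=a_i$ for $i\ge2$. Parametrizing $\gamma^+$ by arc length,
\[
 \rho\bigl(p,\gamma^+(t)\bigr)=\begin{cases}\tau+t, & 0\le t\le R,\\ R+|t-R-\tau|, & R\le t\le S,\end{cases}
\]
so $\rho(p,\gamma^+)=\min\{\tau,R\}$. Your point has $\tau=\frac12(S-M)$. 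It therefore lies at distance $\frac12(S-M)$ from the common endpoint $a$ of the two sides, and gives only $\Rips\ge\frac12(S-M)$, not $S-M$.

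Your planned verification would not catch this, because the function above is not convex. It increases on $[0,R]$, decreases to the value $R$ at $t=R+\tau$, then increases again. Each coordinate term is constant outside the phase in which that coordinate moves, so neither the terms nor their sum need be convex. A slope check at the breakpoints finds the local minimum $R$ and misses the true minimum $\tau$ at $t=0$.

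The repair is to take $\tau\in[R,M]$, which is possible exactly because $M>R$; for instance the corner $\tau=M$, or the midpoint $\tau=S/2$ of $\gamma^-$. Then argue without any minimization, as the paper does. If $\pi_1(q)=a_1$, then $\rho(p,q)\ge\tau\ge R$. Otherwise $q$ lies in the last phase of $\gamma^+$, so $\pi_i(q)=b_i$ for all $i\ge2$ and $\rho(p,q)\ge\sum_{i\ge2}d_i=R$.

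Your account of where the half unit goes is also misplaced. In the balanced regime $\Rips(X)=S/2$ already, so nothing disappears there. The half unit is lost in the dominant regime: the $\ell_1$-metric has no exceptional-pair correction $c(x,y)$, and that is what lowers the upper bound from $R+\frac12$ to $R$.
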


Thus $\Rips(X)-\Rips(|X|_1)$ is $0$ in the balanced regime and exactly $\frac12$ in
the dominant regime, and in the latter regime the two conventions coincide on
$|X|_1$.

All the proofs are self-contained. The key tool is an exact distance formula for
the geodesic realization of $X$ (\cref{lem:distance}), from which we derive the
behaviour of the coordinate projections of geodesics (\cref{lem:geodesics}) and the
upper bound $\Rips(X)\le S-d_j+\frac12$ for every $j$ (\cref{prop:upper}). We also
treat locally finite products with unbounded factors (\cref{thm:unbounded}) and
record algorithmic consequences (\cref{sec:algorithms}).

\subsection*{Relation to previous work}
We do not claim novelty for the two-tree Rips formula of
\cite{MichelRodriguezSigarretaVilleta2010} or for the path-grid four-point formula
of \cite{CoudertDucoffe2016}, which are special cases of Theorems A and B. The
median-graph literature provides qualitative characterizations of hyperbolicity in
terms of thin bigons and bounded square grids
\cite{ChalopinChepoiHiraiOsajda2020,ChepoiDraganEstellonHabibVaxes2008,Sigarreta2013}.
To the best of our knowledge, the sharp $k$-factor Rips formula, the partition
formula for arbitrary tree factors, the exact comparison of the two conventions and
the $\ell_1$ analogue have not been stated before.

\section{Preliminaries}\label{sec:prelim}

Graphs are connected and simple, with edges of length $1$ unless stated otherwise.
Each graph $G$ is regarded as a geodesic metric graph: every edge is identified with
a copy of the unit interval, so that points in the interior of edges belong to the
space, and $d_G$ is the induced length metric. We write $V(G)$ for the vertex set,
$\diam G=\sup\{d_G(x,y):x,y\in G\}$ and $\diamV G=\sup\{d_G(u,v):u,v\in V(G)\}$;
for a tree these agree. A graph is \emph{nontrivial} if it has at least two vertices.

\begin{definition}[Thin-triangle hyperbolicity]
Let $Y$ be a geodesic metric space. A \emph{geodesic triangle} is the union of three
geodesics joining three points; it is \emph{$\delta$-thin} if each side is contained
in the closed $\delta$-neighbourhood of the union of the other two sides. The sharp
\emph{Rips constant} is
$\Rips(Y)=\inf\{\delta\ge0:\text{every geodesic triangle in }Y\text{ is }
\delta\text{-thin}\}\in[0,\infty]$. A \emph{geodesic bigon} is a degenerate geodesic
triangle formed by two geodesics with common endpoints.
\end{definition}

This is the convention of
\cite{MichelRodriguezSigarretaVilleta2010,RodriguezSigarretaVilaireVilleta2011}.
Since every point of a side of length $\ell$ is within $\ell/2$ of an endpoint, and
the endpoints belong to the other sides,
\begin{equation}\label{eq:diambound}
 \Rips(Y)\le\tfrac12\diam Y
\end{equation}
for every geodesic metric space $Y$ (see \cite{RodriguezSigarretaVilaireVilleta2011}
for graphs).

\begin{definition}[Four-point hyperbolicity]
For a metric space $(Y,d)$ and $x,y,z,w\in Y$, consider the three sums
\[
 \sigma_1=d(x,y)+d(z,w),\quad \sigma_2=d(x,z)+d(y,w),\quad
 \sigma_3=d(x,w)+d(y,z).
\]
The \emph{four-point defect} $\four(x,y,z,w)$ is half the difference between the
largest and the second largest of them, and
$\four(Y)=\sup_{x,y,z,w\in Y}\four(x,y,z,w)$. For a graph $G$, $\four(V(G))$ is the
four-point constant of the vertex metric.
\end{definition}

A metric space has $\four=0$ if and only if it embeds isometrically in an $\R$-tree;
in that case the two largest of the three sums coincide for every quadruple.

The Cartesian product $G_1\times\cdots\times G_k$ has vertex set
$V(G_1)\times\cdots\times V(G_k)$, two vertices being adjacent when they differ in
exactly one coordinate and their entries in that coordinate are adjacent
\cite{HammackImrichKlavzar2011,Sabidussi1960}. Its geodesic realization is its
$1$-skeleton: every point $x$ has coordinates $\pi_i(x)\in G_i$, at most one of
which lies in the interior of an edge. An edge of $X=T_1\times\cdots\times T_k$ along
which the $i$-th coordinate changes is said to lie \emph{over} the corresponding
edge of $T_i$.

\section{The metric of a product of trees}\label{sec:lemmas}

In this section $X=T_1\times\cdots\times T_k$ is a product of trees.

\begin{definition}
Two points $x,y\in X$ form an \emph{exceptional pair in the coordinate $m$} if they
lie in the interiors of two distinct edges of $X$ over the same edge $e$ of $T_m$. In
that case we write $s,t\in(0,1)$ for the distances of $\pi_m(x)$ and $\pi_m(y)$ from
a fixed endpoint of $e$.
\end{definition}

\begin{lemma}[Distance formula]\label{lem:distance}
For all $x,y\in X$,
\begin{equation}\label{eq:distance}
 d_X(x,y)=\sum_{i=1}^kd_{T_i}(\pi_i(x),\pi_i(y))+c(x,y),
\end{equation}
where $c(x,y)=0$ unless $x,y$ form an exceptional pair in some coordinate, in which
case
\[
 c(x,y)=\min\{s+t,\,2-s-t\}-|s-t|=2\min\{s,t,1-s,1-t\}\in(0,1].
\]
Moreover, in the exceptional case every path from $x$ to $y$ moves the $m$-th
coordinate by at least $\min\{s+t,2-s-t\}$.
\end{lemma}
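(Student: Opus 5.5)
We prove the lower bound by projecting paths onto the factor trees and the upper bound by exhibiting explicit paths.

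\emph{Lower bound.} Let $\gamma$ be any path from $x$ to $y$ in $X$. For each $i$, the coordinate projection $\pi_i\circ\gamma$ is a path in $T_i$ from $\pi_i(x)$ to $\pi_i(y)$. Along any edge of $X$ only one coordinate moves, and it moves at unit speed. Hence the length of $\gamma$ equals the sum of the lengths of its projections. Each projection has length at least $d_{T_i}(\pi_i(x),\pi_i(y))$, so $\ell(\gamma)\ge\sum_i d_{T_i}(\pi_i(x),\pi_i(y))$.

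In the exceptional case this bound must be improved in the coordinate $m$. The points $x$ and $y$ lie on distinct edges of $X$ over the same edge $e=[a,b]$ of $T_m$. These edges differ in some coordinate $j\ne m$, say $\pi_j(x)=u\neq v=\pi_j(y)$. While $\pi_m(\gamma)$ stays in the open edge $e$, the path $\gamma$ stays in the open set $\{z:\pi_m(z)\in(a,b)\}$. Every point of that set has all other coordinates at vertices, and each component of the set is a single open edge. So $\gamma$ cannot pass from the edge of $x$ to the edge of $y$ while $\pi_m\gamma$ stays interior to $e$. Therefore $\pi_m\circ\gamma$ must reach $a$ or $b$ at some time.

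Measure positions from $a$. A path in $T_m$ from $s$ to $t$ that visits $a$ has length at least $s+t$; one that visits $b$ has length at least $(1-s)+(1-t)$. This gives the ``moreover'' statement: the $m$-th coordinate moves by at least $\min\{s+t,2-s-t\}$. Replacing $|s-t|=d_{T_m}(\pi_m x,\pi_m y)$ by this quantity in the sum yields $\ell(\gamma)\ge\sum_i d_{T_i}+c(x,y)$.

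\emph{Upper bound.} We construct paths achieving the bound.
\begin{itemize}
\item \emph{Non-exceptional case.} Move one coordinate at a time along the tree geodesics. First move the coordinate(s) in which $x$ or $y$ is interior to an edge. If $x$ and $y$ are interior in the same coordinate but not exceptional, they lie on the same edge, and the straight segment works. If they are interior in different coordinates, first move $x$'s interior coordinate along its geodesic to a vertex or to the target, then proceed. Each coordinate is used exactly once, so the length is $\sum_i d_{T_i}$. We must check that every intermediate point is a legitimate point of $X$, i.e.\ has at most one non-vertex coordinate. This holds because we only move one coordinate at a time, and we only start moving a coordinate once every other coordinate is at a vertex.
\item \emph{Exceptional case.} Move $\pi_m$ from $s$ to $a$ (or to $b$), adjust the other coordinates along their geodesics, then move $\pi_m$ from $a$ to $t$. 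The length is $\sum_{i\ne m}d_{T_i}+\min\{s+t,2-s-t\}$.
\end{itemize}

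\emph{Identity for $c$.} It is elementary: with $s\le t$ by symmetry, $\min\{s+t,2-s-t\}-(t-s)=\min\{2s,2-2t\}=2\min\{s,t,1-s,1-t\}$. This lies in $(0,1]$ because $s,t\in(0,1)$.

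The main obstacle is the topological claim that the $m$-th projection must reach an endpoint of $e$. We handle it by the connected-component argument above, using that $X$ is a $1$-skeleton.
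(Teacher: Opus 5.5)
\textbf{Gap: one non-exceptional case is missing from your upper bound.} Your lower bound is correct and is the paper's argument, made slightly more rigorous. Decomposing $\{z:\pi_m(z)\in(a,b)\}$ into disjoint open edges cleanly justifies the paper's remark that $\gamma$ must leave the edge of $x$ and enter the edge of $y$ through vertices of $X$. The identity for $c$ is also fine.

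The gap is in the non-exceptional case of the upper bound. You assert that if $x$ and $y$ are interior in the same coordinate $i$ but are not an exceptional pair, then they lie on the same edge of $X$. That is false. By the definition, they may lie over two \emph{distinct} edges $e\neq e'$ of $T_i$. For example, in $P_3\times P_2$ take $x=(0.5,0)$ and $y=(1.5,1)$; then $d_X(x,y)=2$.

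For such pairs your recipe cannot be carried out as soon as some other coordinate differs. The recipe requires that each coordinate is used exactly once, and that a coordinate is started only when all others are at vertices. Coordinate $i$ must move first, because $\pi_i(x)$ is interior. Once it has reached the interior point $\pi_i(y)$, no other coordinate may start. So for these pairs you have not exhibited a path of length $\sum_l d_{T_l}(\pi_l(x),\pi_l(y))$.

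\textbf{Repair.} Use the same splitting you already use in the exceptional case.
\begin{enumerate}
\item Move $\pi_i$ along the geodesic $[\pi_i(x),\pi_i(y)]$ of $T_i$ only up to the vertex $w$ where that geodesic enters $e'$. Throughout, the other coordinates stay at the vertices $\pi_l(x)$, so the path lies in $X$.
\item Move the remaining coordinates one at a time, vertex to vertex, to $\pi_l(y)$.
\item Move $\pi_i$ from $w$ to $\pi_i(y)$ inside $e'$.
\end{enumerate}
The two pieces of the $i$-th motion concatenate to the tree geodesic, so the total length is $\sum_l d_{T_l}(\pi_l(x),\pi_l(y))$.

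This is exactly the case ``$i=j$, distinct edges of $T_i$'' that the paper treats separately. The paper routes through the endpoints $u^\circ$, $v^\circ$ at which the tree geodesic leaves the edge of $x$ and enters the edge of $y$. Organising your construction the same way handles all cases uniformly: go to a suitable endpoint of the edge containing $x$, staircase between vertices, then run along the edge containing $y$.

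Relatedly, your sentence ``first move the coordinate(s) in which $x$ or $y$ is interior'' should say that $x$'s interior coordinate is moved first and $y$'s interior coordinate is moved last. Moving $y$'s interior coordinate early would block every later move.
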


\begin{proof}
\emph{Lower bound.} Let $\gamma$ be a rectifiable path from $x$ to $y$. Every subarc
of $\gamma$ contained in an edge changes exactly one coordinate, so
$L(\gamma)=\sum_iL_i(\gamma)$, where $L_i(\gamma)$ is the length of the path
$\pi_i\circ\gamma$ in $T_i$; hence $L_i(\gamma)\ge d_{T_i}(\pi_i(x),\pi_i(y))$. In the
exceptional case, $\gamma$ must leave the edge containing $x$ and enter the edge
containing $y$ through vertices of $X$, whose $m$-th coordinates are endpoints of $e$;
thus $\pi_m\circ\gamma$ goes from $\pi_m(x)$ to an endpoint of $e$ and later from an
endpoint of $e$ to $\pi_m(y)$, and $L_m(\gamma)\ge\min\{s+t,2-s-t\}$. This proves the
inequality $\ge$ in \eqref{eq:distance} and the last assertion.

\emph{Upper bound.} Let $x\in[u,u']$ and $y\in[v,v']$, where $u,u'$ differ only in
the coordinate $i$ and $v,v'$ only in the coordinate $j$ (a vertex lies on any edge
containing it). For $u^\circ\in\{u,u'\}$ and $v^\circ\in\{v,v'\}$,
\begin{equation}\label{eq:routes}
 d_X(x,y)\le d_X(x,u^\circ)+\sum_{l}d_{T_l}(u^\circ_l,v^\circ_l)+d_X(v^\circ,y).
\end{equation}
If $i\ne j$, the coordinates $l\ne i,j$ contribute $d_{T_l}(\pi_l(x),\pi_l(y))$ for
every choice. In the coordinate $i$, $v_i=v'_i$ is a vertex, and the geodesic of $T_i$
from the point $\pi_i(x)$ of the edge $u_iu'_i$ to $v_i$ leaves that edge through one
of its endpoints; choosing $u^\circ$ accordingly, the terms involving the coordinate
$i$ add up to $d_{T_i}(\pi_i(x),\pi_i(y))$. The coordinate $j$ is treated in the same
way, independently, and \eqref{eq:distance} follows with $c=0$. If $i=j$ and the
edges $u_iu'_i$, $v_iv'_i$ of $T_i$ are distinct, the same argument applied to the
geodesic of $T_i$ from $\pi_i(x)$ to $\pi_i(y)$ gives $c=0$. If $i=j$ and these edges
coincide, then either $x,y$ lie on the same edge of $X$ and $d_X(x,y)=|s-t|$, or
$x,y$ form an exceptional pair; choosing $u^\circ,v^\circ$ with the same $i$-th
coordinate gives the bound $\min\{s+t,2-s-t\}+\sum_{l\ne i}d_{T_l}(\pi_l(x),\pi_l(y))$.
\end{proof}

\begin{corollary}[Diameter]\label{cor:diameter}
If the trees are finite and nontrivial, then $\diamV X=\diam X=S$.
\end{corollary}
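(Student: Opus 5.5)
The plan is to derive both equalities from the distance formula of \cref{lem:distance}, proving $\diam X\le S$ and then exhibiting two vertices at distance $S$. Since $V(X)\subseteq X$, we have $\diamV X\le\diam X$, so these two facts give $S\le\diamV X\le\diam X\le S$.

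\emph{Lower bound.} For each $i$, choose vertices $a_i,b_i$ of the finite tree $T_i$ with $d_{T_i}(a_i,b_i)=d_i$; in a finite tree the diameter is attained at vertices. The vertices $a=(a_1,\dots,a_k)$ and $b=(b_1,\dots,b_k)$ are never an exceptional pair, because exceptional pairs consist of interior points of edges. Hence \eqref{eq:distance} gives $d_X(a,b)=\sum_id_i=S$, so $\diamV X\ge S$.

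\emph{Upper bound.} Let $x,y\in X$ be arbitrary. If they do not form an exceptional pair, \eqref{eq:distance} gives $d_X(x,y)=\sum_id_{T_i}(\pi_i(x),\pi_i(y))\le\sum_id_i=S$, since each $T_i$ has diameter $d_i$, also as a geodesic metric tree. The only step needing care is the exceptional case, where the correction $c(x,y)\in(0,1]$ could a priori push the distance above $S$.

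Suppose then that $x,y$ form an exceptional pair in coordinate $m$, lying over the edge $e=pq$ of $T_m$, with parameters $s,t$. By the lemma, the $m$-th term plus the correction equals $\min\{s+t,2-s-t\}\le1$. The edges of $X$ containing $x$ and $y$ are distinct but lie over the same edge $e$, so they differ in some coordinate $l\ne m$. In that coordinate $\pi_l(x)$ and $\pi_l(y)$ are distinct vertices, so $d_{T_l}(\pi_l(x),\pi_l(y))\ge1$; this is only a sanity check and does not yield an upper bound. What is needed is that the remaining coordinates sum to at most $S-1$, which requires $d_m\ge1$. This holds because $T_m$ is nontrivial, and it is exactly where nontriviality enters. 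Then
\[
 d_X(x,y)\le 1+\sum_{i\ne m}d_i\le d_m+\sum_{i\ne m}d_i=S.
\]
This gives $\diam X\le S$ and completes the argument.
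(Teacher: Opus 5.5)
Your proof is correct and follows the paper's argument: diametral vertex pairs in the factors give $d_X(a,b)=S$ by the distance formula, and for arbitrary points the only case needing care is the exceptional one, where coordinate $m$ contributes $|s-t|+c=\min\{s+t,2-s-t\}\le 1\le d_m$. One small correction to your commentary: $d_m\ge1$ holds automatically there because $T_m$ contains the edge $e$, so nontriviality is not what that step uses.
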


\begin{proof}
By \eqref{eq:distance} the vertex diameter is $S$, attained by diametral pairs in the
factors. For arbitrary points, the sum in \eqref{eq:distance} is at most $S$, and in
the exceptional case the coordinate $m$ contributes $|s-t|+c=\min\{s+t,2-s-t\}\le1\le
d_m$; hence $d_X(x,y)\le S$.
\end{proof}

\Cref{cor:diameter} also follows by induction from the identity
$\diam(T\times G)=\diam T+\diam G$, valid for every tree $T$ and every graph $G$
\cite[Corollary~10]{MichelRodriguezSigarretaVilleta2010}.

\begin{lemma}[Projections of geodesics]\label{lem:geodesics}
Let $\gamma$ be a geodesic of $X$ from $x$ to $y$ and let $i\in\{1,\dots,k\}$.
\begin{enumerate}[label=\textup{(\roman*)},leftmargin=*]
\item If $x,y$ do not form an exceptional pair in the coordinate $i$, then
$\pi_i\circ\gamma$ has length $d_{T_i}(\pi_i(x),\pi_i(y))$; hence it traverses the
geodesic $[\pi_i(x),\pi_i(y)]$ of $T_i$ monotonically, and
$\pi_i(\gamma)=[\pi_i(x),\pi_i(y)]$.
\item If $x,y$ form an exceptional pair in the coordinate $i$, over the edge $e$ of
$T_i$, then $\pi_i(\gamma)\subseteq e$.
\end{enumerate}
\end{lemma}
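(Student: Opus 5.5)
The plan is to compare lengths coordinatewise, using the decomposition from the proof of \cref{lem:distance}: for any rectifiable path $\gamma$, $L(\gamma)=\sum_l L_l(\gamma)$ with $L_l(\gamma)\ge d_{T_l}(\pi_l(x),\pi_l(y))$ for every $l$. In the exceptional case, in the exceptional coordinate $m$ we have the stronger bound $L_m(\gamma)\ge \min\{s+t,2-s-t\}=|s-t|+c(x,y)$. An exceptional pair is exceptional in at most one coordinate, since both points lie in edge interiors over $T_m$. Adding these lower bounds gives exactly $d_X(x,y)$ by \eqref{eq:distance}. So if $\gamma$ is a geodesic, every inequality must be an equality. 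This is the key observation.

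\emph{Part (i).} Suppose $x,y$ are not exceptional in coordinate $i$. Then the equality case gives $L_i(\gamma)=d_{T_i}(\pi_i(x),\pi_i(y))$. This holds both when $x,y$ are exceptional in some other coordinate $m\ne i$ and when they are not exceptional at all, since in each case the term for $i$ is the plain tree distance. Now $\pi_i\circ\gamma$ is a path in the tree $T_i$ whose length equals the distance between its endpoints, so it is a geodesic of the $\R$-tree $T_i$. Geodesics in trees are unique and injective, so the path traverses the segment $[\pi_i(x),\pi_i(y)]$ monotonically. Its image is exactly that segment: it contains the segment because the image is connected and contains both endpoints, and it is contained in it because any excursion off the segment would add length.

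\emph{Part (ii).} Suppose $x,y$ are exceptional over the edge $e=[a,b]$ of $T_i$. The equality case now gives $L_i(\gamma)=\min\{s+t,2-s-t\}\le 1$. From the proof of \cref{lem:distance}, $\pi_i\circ\gamma$ starts at $\pi_i(x)$, reaches an endpoint of $e$ at some time, and later leaves some endpoint of $e$ to reach $\pi_i(y)$. These two pieces already cost at least $\min\{s,1-s\}+\min\{t,1-t\}$.

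The main obstacle is a short bookkeeping step: showing that any excursion of $\pi_i\circ\gamma$ outside $e$ would make the length strictly exceed $\min\{s+t,2-s-t\}$. I would argue directly. Suppose $\pi_i\circ\gamma$ reaches a point $p\notin e$, say beyond $a$. The path must go from $\pi_i(x)$ to $p$ through $a$ and then return from $p$ through $a$ to $\pi_i(y)$. Its length is then at least $s+t+2d(a,p)>s+t$. If instead $p$ lies beyond $b$, the same argument gives a length exceeding $2-s-t$. Either case exceeds the minimum, contradicting equality. Hence $\pi_i(\gamma)\subseteq e$.
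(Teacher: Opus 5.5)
Your proposal is correct and follows essentially the same route as the paper: the decomposition $L(\gamma)=\sum_l L_l(\gamma)$, combined with the coordinatewise lower bounds of \cref{lem:distance} (including the stronger bound in the exceptional coordinate), forces every inequality to be an equality. This gives (i) at once, and gives (ii) because an excursion outside $e$ would cost more than $\min\{s+t,2-s-t\}$. Your estimate $s+t+2d_{T_i}(a,p)$ for such an excursion simply spells out the step the paper states in one line.
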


\begin{proof}
Write $L_l=L_l(\gamma)$ and $d_l=d_{T_l}(\pi_l(x),\pi_l(y))$. By
\cref{lem:distance}, $\sum_l L_l=L(\gamma)=\sum_ld_l+c(x,y)$ and $L_l\ge d_l$ for all
$l$. If $c(x,y)=0$, then $L_l=d_l$ for all $l$. If $x,y$ form an exceptional pair in
a coordinate $m$, the last assertion of \cref{lem:distance} gives
$L_m\ge d_m+c(x,y)$, hence $L_m=d_m+c(x,y)=\min\{s+t,2-s-t\}$ and $L_l=d_l$ for
$l\ne m$. This proves (i), since a path in a tree whose length equals the distance
between its endpoints is a monotone parametrization of the geodesic joining them. In
case (ii), $\pi_i\circ\gamma$ has length $\min\{s+t,2-s-t\}$ and visits an endpoint of
$e$; it cannot leave $e$ without exceeding this length.
\end{proof}

\begin{lemma}[Sequential geodesics]\label{lem:sequential}
Let the trees be finite and, for each $i$, let $\alpha_i$ be a diametral geodesic of
$T_i$ from $a_i$ to $b_i$, parametrized by arc length on $[0,d_i]$. Put
$a=(a_1,\dots,a_k)$ and $b=(b_1,\dots,b_k)$. For every permutation $\tau$ of
$\{1,\dots,k\}$, the path which starts at $a$ and traverses
$\alpha_{\tau(1)},\dots,\alpha_{\tau(k)}$ successively, changing one coordinate at a
time, is a geodesic from $a$ to $b$.
\end{lemma}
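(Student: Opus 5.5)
The plan is to compute the length of the described path and compare it with $d_X(a,b)$, which \cref{lem:distance} gives exactly. Both endpoints are vertices, so they cannot form an exceptional pair, $c(a,b)=0$, and
\[
 d_X(a,b)=\sum_{i=1}^k d_{T_i}(a_i,b_i)=\sum_{i=1}^k d_i=S .
\]
It therefore suffices to show that the path $\gamma$ is a well-defined path in $X$ of length exactly $S$. A path of length equal to the distance between its endpoints is a geodesic.

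First I will describe $\gamma$ precisely as a concatenation $\gamma=\gamma_1\cdots\gamma_k$. For $r=1,\dots,k$, the segment $\gamma_r$ fixes every coordinate $l\ne\tau(r)$. Its fixed values are $b_l$ if $l\in\{\tau(1),\dots,\tau(r-1)\}$ and $a_l$ otherwise. The coordinate $\tau(r)$ follows $\alpha_{\tau(r)}$ from $a_{\tau(r)}$ to $b_{\tau(r)}$. Since $\alpha_{\tau(r)}$ is a path in the tree $T_{\tau(r)}$ through vertices and edges, $\gamma_r$ runs through vertices of $X$ and along edges of $X$ lying over the edges of $\alpha_{\tau(r)}$. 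It is therefore a genuine path in the geodesic realization, contained in a copy $\{\dots\}\times T_{\tau(r)}\times\{\dots\}$ of the factor. Consecutive segments match: $\gamma_r$ ends, and $\gamma_{r+1}$ starts, at the vertex whose coordinates $\tau(1),\dots,\tau(r)$ are $b$ and the rest are $a$. The last segment ends at $b$.

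Next, each $\gamma_r$ changes only one coordinate. Its length in $X$ therefore equals the length of its projection, namely $L(\alpha_{\tau(r)})=d_{\tau(r)}$, because a copy of $T_i$ in $X$ is isometric along edges. This is the observation $L(\gamma)=\sum_i L_i(\gamma)$ from the proof of \cref{lem:distance}. Summing over $r$ gives $L(\gamma)=\sum_r d_{\tau(r)}=S=d_X(a,b)$, so $\gamma$ is a geodesic. Alternatively, one can invoke \cref{cor:diameter} to see that $S$ is the maximal possible distance.

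There is no real obstacle here. The only point needing care is the bookkeeping that each segment is an honest edge path in $X$ and that the segments connect. Arc-length parametrization of the concatenation then gives an isometric embedding of $[0,S]$.
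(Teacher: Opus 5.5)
Your proposal is correct and is essentially the paper's proof, which just observes that the path has length $S$ while $d_X(a,b)=S$ by the distance formula \eqref{eq:distance}; your extra bookkeeping (segments matching up, each segment's length equal to $d_{\tau(r)}$) only makes that one line explicit. The aside about \cref{cor:diameter} is unnecessary, and on its own it would not suffice: it only gives $d_X(a,b)\le S$, so the lower bound from \eqref{eq:distance} is what actually does the work.
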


\begin{proof}
The path has length $S=d_X(a,b)$ by \eqref{eq:distance}.
\end{proof}

A \emph{subtree} of a tree is a connected subgraph; a single vertex is allowed.

\begin{lemma}[Sub-products]\label{lem:subproduct}
Let $T'_i$ be a subtree of $T_i$, $1\le i\le k$, and
$X'=T'_1\times\cdots\times T'_k\subseteq X$.
\begin{enumerate}[label=\textup{(\roman*)},leftmargin=*]
\item $d_{X'}=d_X$ on $X'$; consequently $\Rips(X')\le\Rips(X)$ and
$\four(V(X'))\le\four(V(X))$.
\item If each $T_i$ is the union of an increasing sequence of finite subtrees
$T_i^{(n)}$ and $X^{(n)}=T_1^{(n)}\times\cdots\times T_k^{(n)}$, then
$\Rips(X)=\sup_n\Rips(X^{(n)})$ and $\four(V(X))=\sup_n\four(V(X^{(n)}))$.
\end{enumerate}
\end{lemma}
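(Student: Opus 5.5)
The plan is to derive both parts from the distance formula of \cref{lem:distance}, applied once to $X'$ and once to $X$, and to pass to the limit in (ii) through the geodesics that witness the constants.

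\emph{Part (i).} Each $T'_i$ is a subtree, so it is convex in $T_i$ and $d_{T'_i}=d_{T_i}$ on $T'_i$. The product $X'$ is itself a product of trees, so \cref{lem:distance} applies to it as well as to $X$.

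A point of $X'$ lies on an edge of $X'$, which is also an edge of $X$. The notion of exceptional pair in coordinate $m$ is the same in $X'$ and in $X$: two distinct edges of $X$ over the same edge $e$ of $T_m$ that lie in $X'$ are distinct edges of $X'$ over $e$. The values of $s,t$ also agree. Therefore the right-hand sides of \eqref{eq:distance} coincide for $X'$ and $X$, and $d_{X'}=d_X$ on $X'$.

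Hence $X'$ is an isometrically embedded geodesic subspace. Every geodesic of $X'$ is a geodesic of $X$, and neighbourhoods computed in $X'$ agree with those computed in $X$ intersected with $X'$, since distances agree. So every geodesic triangle of $X'$ is a geodesic triangle of $X$ with the same thinness, and $\Rips(X')\le\Rips(X)$.

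In the same way, $V(X')\subseteq V(X)$ carries the restricted metric, so every quadruple in $V(X')$ is a quadruple in $V(X)$ with the same defect, and $\four(V(X'))\le\four(V(X))$.

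\emph{Part (ii).} By (i), the two sequences are nondecreasing and bounded by the constants of $X$, so only the inequalities $\ge$ need proof.

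For $\four$ the argument is immediate. Any four vertices of $X$ have finitely many coordinates, so all of them lie in $V(X^{(n)})$ for $n$ large. By (i) their defect is the same in $X^{(n)}$, so it is at most $\sup_n\four(V(X^{(n)}))$.

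For $\Rips$, suppose $\delta<\Rips(X)$. Then some geodesic triangle of $X$ has a point $p$ on one side at distance $>\delta$ from the union of the other two sides. The three sides are compact, so each lies in a finite union of edges. Each projection $\pi_i$ of the triangle is therefore a compact subset of $T_i$ meeting only finitely many edges. Those edges lie in some $T_i^{(n)}$ because the exhaustion is increasing. Hence the whole triangle lies in $X^{(n)}$, and by (i) it is a geodesic triangle there with the same distances. This gives $\Rips(X^{(n)})>\delta$.

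\textbf{Main obstacle.} The one point needing care is the statement that a compact path meets only finitely many edges. This requires local finiteness, or at least an argument that a rectifiable path of finite length can touch only finitely many edges. The latter follows from $L=\sum_iL_i$ in the proof of \cref{lem:distance}: a finite-length path in a tree with unit edges visits only finitely many vertices, so its projection meets only finitely many edges. I would record this explicitly, and the rest of the proof is routine.
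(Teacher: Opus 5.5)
Your argument is sound in outline, and part (ii) is the paper's proof: (i) gives monotonicity and boundedness, and then a witnessing quadruple or triangle is shown to lie in some $X^{(n)}$.

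Part (i) takes a different route from the paper. The paper never invokes \cref{lem:distance} there. It uses the nearest-point retractions $\rho_i\colon V(T_i)\to V(T'_i)$, which send adjacent vertices to equal or adjacent vertices. Extended affinely over edges, $\rho=(\rho_1,\dots,\rho_k)$ is then a $1$-Lipschitz retraction of $X$ onto $X'$, and pushing paths of $X$ into $X'$ gives $d_{X'}\le d_X$.

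You instead read both metrics off the exact formula and check that each ingredient is the same in $X'$ and $X$: convexity of the $T'_i$, the notion of exceptional pair, and the parameters $s,t$. This is legitimate, since \cref{lem:distance} holds for any product of trees (single-vertex factors just contribute $0$). Once that lemma is proved, it is the shorter route.

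The retraction argument has its own advantages. It avoids the exceptional-pair bookkeeping, it gives the extra fact that $X'$ is a $1$-Lipschitz retract of $X$, and it works verbatim for products of arbitrary graphs whenever each factor retracts in this way onto the chosen subgraph.

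Your ``main obstacle'' paragraph flags a real point. The paper simply says the triangle is compact, hence meets finitely many edges; that is fine for the locally finite trees of \cref{thm:unbounded}, the only place (ii) is used. However, the justification you offer is wrong. A path of finite length does visit only finitely many vertices, but at a vertex of infinite degree it can still meet infinitely many edges. In an infinite star, the path from the centre that goes out to depth $2^{-n}$ along the $n$-th edge and back, for every $n$, has length $2$.

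This matters because the lemma does not assume local finiteness: an infinite star is an increasing union of finite stars. Argue with geodesics instead. By \cref{lem:geodesics}, each projection $\pi_i$ of a side either traverses $[\pi_i(x),\pi_i(y)]$ monotonically or stays inside one edge. A segment of finite length in a tree with unit edges is covered by finitely many edges. So the triangle lies in $F_1\times\cdots\times F_k$ for finite subtrees $F_i\subseteq T_i$, hence in $X^{(n)}$ for $n$ large, and the rest of your argument goes through.
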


\begin{proof}
(i) The nearest-point retraction $\rho_i\colon V(T_i)\to V(T'_i)$ maps adjacent
vertices to equal or adjacent vertices. Hence $\rho=(\rho_1,\dots,\rho_k)$ maps every
edge of $X$ onto an edge or a vertex of $X'$; extended affinely to the edges, it is
a $1$-Lipschitz retraction of $X$ onto $X'$. If $x,y\in X'$ and $\gamma$ joins them
in $X$, then $\rho\circ\gamma$ joins them in $X'$ and is not longer; thus
$d_{X'}\le d_X$ on $X'$, and the reverse inequality is obvious. Therefore geodesic
triangles of $X'$ are geodesic triangles of $X$ with the same thinness, and
quadruples of vertices of $X'$ have the same defect in $X'$ and in $X$.

(ii) By (i) both sequences are nondecreasing and bounded by the constants of $X$. A
geodesic triangle of $X$ is compact, hence meets finitely many edges and lies in
$X^{(n)}$ for $n$ large, where by (i) it is a geodesic triangle with the same
thinness; similarly for quadruples of vertices.
\end{proof}

%==========================================================================
\section{The thin-triangle constant}\label{sec:rips}
%==========================================================================

In this section the trees are finite and $k\ge2$; we write $R_j=S-d_j$, so that
$R=\min_jR_j=S-M$, and note that $R_j\ge1$ for every $j$.

\begin{lemma}[Balanced bigon]\label{lem:balancedbigon}
If $M\le R$, then $X$ contains a geodesic bigon with a point at distance exactly
$S/2$ from the other side. Consequently $\Rips(X)\ge S/2$.
\end{lemma}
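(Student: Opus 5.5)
The plan is to build the bigon from two sequential geodesics, given by \cref{lem:sequential}, that use opposite coordinate orders. Fix diametral geodesics $\alpha_i$ of $T_i$ from $a_i$ to $b_i$. Let $\gamma_1$ be the sequential geodesic from $a$ to $b$ that traverses $\alpha_1,\dots,\alpha_k$ in this order, and let $\gamma_2$ be the one that traverses $\alpha_k,\dots,\alpha_1$. Both have length $S=d_X(a,b)$, so together they form a geodesic bigon. Let $p$ be the point of $\gamma_1$ at arc length $S/2$ from $a$. There is an index $j$ with $\sum_{i<j}d_i\le S/2<\sum_{i\le j}d_i$. Then $p$ has $i$-th coordinate $b_i$ for $i<j$, coordinate $\alpha_j(t)$ with $t=S/2-\sum_{i<j}d_i\in[0,d_j)$, and coordinate $a_i$ for $i>j$.

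Every point $q$ of $\gamma_2$ has each coordinate on the corresponding $\alpha_i$. Write $q_i\in[0,d_i]$ for its parameter, and similarly $p_i$ for $p$. Since $\alpha_i$ is a geodesic of $T_i$, we get $d_{T_i}(\pi_i(p),\pi_i(q))=|p_i-q_i|$. By \cref{lem:distance}, and because $c\ge0$,
\[
 d_X(p,q)\ \ge\ f(q):=\sum_{i=1}^k|p_i-q_i| .
\]
So it suffices to show that $f\ge S/2$ along $\gamma_2$. Since $d_X(p,a)=d_X(p,b)=S/2$, this gives distance exactly $S/2$ from $p$ to $\gamma_2$.

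I will track $f$ as $q$ moves along $\gamma_2$. At $q=a$ we have $f=S/2$. While $\gamma_2$ moves the coordinates $k,\dots,j+1$, where $p_i=0$, the value of $f$ increases by $\sum_{i>j}d_i$. While it moves the coordinate $j$, $f$ first decreases by $t$ and then increases by $d_j-t$. Finally, over the coordinates $j-1,\dots,1$, where $p_i=d_i$, it decreases by $\sum_{i<j}d_i$, ending at $S/2$. Hence
\[
 \min_{\gamma_2}f=\min\Bigl\{\tfrac S2,\ \tfrac S2+\sum_{i>j}d_i-t\Bigr\},
\]
since the only interior local minimum occurs at $q_j=t$. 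The remaining inequality is $\sum_{i>j}d_i\ge t=S/2-\sum_{i<j}d_i$, which is equivalent to $S-d_j\ge S/2$, that is, $d_j\le S/2$. This holds because $d_j\le M\le R=S-M$ gives $M\le S/2$. The consequence $\Rips(X)\ge S/2$ then follows from the definition of $\Rips$.

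The step I expect to need the most care is the bookkeeping of the piecewise-linear function $f$ along $\gamma_2$, together with identifying the single potential dip at $q_j=t$. It is also worth noting that exceptional pairs cause no trouble, because the correction term $c$ only adds to the distance. Notably, the argument works for any coordinate order, so no clever ordering of the factors is required.
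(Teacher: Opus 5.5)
Your proof is correct and follows the paper's argument: the same bigon with opposite coordinate orders, the same midpoint $p$, and the same lower bound $d_X(p,q)\ge\sum_i d_{T_i}(\pi_i(p),\pi_i(q))$. The paper splits into three cases according to which coordinate $\ell$ of $q$ is moving, whereas you track the piecewise-linear $f$ along $\gamma_2$; your dip at $q_j=t$ is the paper's case $\ell=j$, where the bound is $S-d_j\ge R\ge S/2$.
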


\begin{proof}
Choose $\alpha_i$, $a$ and $b$ as in \cref{lem:sequential}, let $\gamma^+$ traverse the
coordinates in the order $1,2,\dots,k$ and $\gamma^-$ in the order $k,\dots,1$, and
let $p$ be the midpoint of $\gamma^+$. There are $j$ and $t\in[0,d_j]$ with
\[
 \pi_i(p)=\begin{cases} b_i, & i<j,\\ \alpha_j(t), & i=j,\\ a_i, & i>j,\end{cases}
 \quad P+t=\frac S2=Q+d_j-t,
\]
where $P=\sum_{i<j}d_i$ and $Q=\sum_{i>j}d_i$. A point $q\in\gamma^-$ has, for some
$\ell$ and $s\in[0,d_\ell]$, coordinates $\pi_i(q)=a_i$ for $i<\ell$,
$\pi_\ell(q)=\alpha_\ell(s)$ and $\pi_i(q)=b_i$ for $i>\ell$. Let
$\Sigma=\sum_id_{T_i}(\pi_i(p),\pi_i(q))\le d_X(p,q)$. If $\ell>j$, the coordinates
$i\le j$ give $\Sigma\ge P+t=S/2$; if $\ell<j$, the coordinates $i\ge j$ give
$\Sigma\ge Q+d_j-t=S/2$; if $\ell=j$, then $\Sigma=P+Q+|t-s|\ge S-M=R\ge S/2$. Hence
$d_X(p,\gamma^-)\ge S/2$, with equality since $a\in\gamma^-$ and $d_X(p,a)=S/2$.
\end{proof}

\begin{lemma}[Dominant bigon]\label{lem:dominantbigon}
If $d_1=M\ge R+1$, then $X$ contains a geodesic bigon with a point at distance at
least $R+\frac12$ from the other side. Consequently $\Rips(X)\ge R+\frac12$.
\end{lemma}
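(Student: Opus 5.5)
The plan is to mimic the balanced construction of \cref{lem:balancedbigon}, but to place the witness point at the midpoint of an edge over the long factor. This lets the exceptional-pair correction of \cref{lem:distance} supply the extra half unit.

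\emph{Construction.} Choose $\alpha_i$, $a$, $b$ as in \cref{lem:sequential}. Let $\gamma^+$ traverse $\alpha_1$ first and then $\alpha_2,\dots,\alpha_k$. Let $\gamma^-$ traverse $\alpha_2,\dots,\alpha_k$ first and $\alpha_1$ last. Both are geodesics from $a$ to $b$ by \cref{lem:sequential}, so they form a geodesic bigon. Put $t=R+\frac12$. Since $d_1\ge R+1$, we have $\frac12\le t\le d_1-\frac12$. Hence $p=(\alpha_1(t),a_2,\dots,a_k)\in\gamma^+$ is the midpoint of an edge of $X$ lying over an edge $e$ of $T_1$. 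I claim $d_X(p,\gamma^-)\ge R+\frac12$.

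\emph{Case analysis over $q\in\gamma^-$.} Throughout, use the lower bound $d_X(p,q)\ge\Sigma=\sum_i d_{T_i}(\pi_i(p),\pi_i(q))$ from \eqref{eq:distance}.

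\textbf{Case 1: $q$ lies in the initial part of $\gamma^-$.} Here $\pi_1(q)=a_1$, so the first coordinate alone gives $\Sigma\ge t=R+\frac12$.

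\textbf{Case 2: $q$ lies in the final part.} Here $q=(\alpha_1(s),b_2,\dots,b_k)$ for some $s\in[0,d_1]$. The coordinates $i\ge2$ contribute exactly $\sum_{i\ge2}d_i=R$, so $d_X(p,q)\ge R+|t-s|+c(p,q)$. Split this case further.
\begin{itemize}
\item If $\alpha_1(s)\notin e$, then $|t-s|\ge\frac12$, and the bound $R+\frac12$ follows.
\item If $\alpha_1(s)$ is an endpoint of $e$, then again $|t-s|=\frac12$.
\item If $\alpha_1(s)$ is interior to $e$, then $q$ and $p$ lie on distinct edges of $X$ over $e$, because they differ in every coordinate $i\ge2$ (and $d_i\ge1$). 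So they form an exceptional pair in the coordinate $1$. With $s'\in(0,1)$ the position of $\pi_1(q)$ in $e$ and $\frac12$ that of $p$, \cref{lem:distance} gives the contribution of coordinate $1$ as $\min\{s'+\frac12,\frac52-s'\}$, which is $\ge\frac12$. Equivalently, $|s'-\frac12|+2\min\{s',1-s',\frac12\}\ge\frac12$, an elementary check.
\end{itemize}
Hence $d_X(p,q)\ge R+\frac12$ in all cases, and therefore $\Rips(X)\ge R+\frac12$.

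The only delicate step is the exceptional-pair subcase. It requires correctly identifying that $p$ and $q$ lie on \emph{distinct} edges over the same edge $e$, which uses $R\ge1$, i.e.\ $k\ge2$ with nontrivial factors, and then applying the $\min\{s+t,2-s-t\}$ form of the correction. Everything else is a direct reading of \eqref{eq:distance}.
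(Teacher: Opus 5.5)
Your proof is correct and is essentially the paper's: it uses the same bigon (coordinate orders $1,2,\dots,k$ and $2,\dots,k,1$), the same edge midpoint $p=(\alpha_1(R+\frac12),a_2,\dots,a_k)$, and the same split according to whether $\pi_1(q)=a_1$ or $\pi_i(q)=b_i$ for all $i\ge2$. The only difference is in the second case, where the paper avoids your three subcases: it notes that $q$ is off the edge $[u,u']$ containing $p$, so every path from $p$ to $q$ exits through an endpoint $u^\circ$ at distance $\frac12$ from $p$, and $d_X(u^\circ,q)\ge R$. There is also a harmless slip in your exceptional subcase, where $2-s'-\frac12=\frac32-s'$, not $\frac52-s'$; this does not affect the bound $\ge\frac12$, and your ``equivalently'' expression is correct.
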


\begin{proof}
With the notation of \cref{lem:sequential}, let $\gamma^-$ traverse the coordinates
in the order $1,2,\dots,k$ and $\gamma^+$ in the order $2,\dots,k,1$. Let $p\in\gamma^-$
be the point with $\pi_1(p)=\alpha_1(R+\frac12)$ and $\pi_i(p)=a_i$ for $i\ge2$; it is
the midpoint of an edge $[u,u']$ of $X$ over an edge of $T_1$. Let $q\in\gamma^+$. If
$\pi_1(q)=a_1$, then $d_X(p,q)\ge d_{T_1}(\pi_1(p),a_1)=R+\frac12$. Otherwise
$\pi_i(q)=b_i\ne a_i$ for all $i\ge2$, so $q\notin[u,u']$ and every path from $p$ to
$q$ passes through $u$ or $u'$; since $d_X(u^\circ,q)\ge\sum_{i\ge2}d_{T_i}(a_i,b_i)=R$
for $u^\circ\in\{u,u'\}$, we get $d_X(p,q)\ge R+\frac12$.
\end{proof}

\begin{proposition}[Upper bound]\label{prop:upper}
For every $j$, $\Rips(X)\le R_j+\frac12$.
\end{proposition}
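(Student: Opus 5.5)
The plan is to project onto the single coordinate $j$ and use that $T_j$ is $0$-hyperbolic. Let $\Delta$ be a geodesic triangle with vertices $x,y,z$ and sides $\gamma_{xy},\gamma_{yz},\gamma_{zx}$, and let $p\in\gamma_{xy}$. We want a point $q\in\gamma_{yz}\cup\gamma_{zx}$ with $d_X(p,q)\le R_j+\frac12$.

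\emph{Easy cases.} If $d_X(p,\{x,y\})\le\frac12$ we are done, because $x,y$ lie on the other sides and $R_j\ge1$. In particular this covers the case $L(\gamma_{xy})\le1$. Otherwise, we pass from $p$ to a point $p'\in\gamma_{xy}$ with $d_X(p,p')\le\frac12$ such that $p'$ is a vertex of $X$ or lies in the interior of an edge over a coordinate $i\ne j$. In both cases $\pi_j(p')$ is a vertex of $T_j$, and by \cref{lem:distance} no exceptional correction in coordinate $j$ can occur for pairs involving $p'$. To get $p'$, note that if $p$ lies in the interior of an edge over coordinate $j$, this edge is met by $\gamma_{xy}$ in a subsegment. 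Unless that subsegment is short, one of its endpoints is a vertex at distance at most $\frac12$ from $p$. If it is short, an endpoint of $\gamma_{xy}$ is within $\frac12$, which is the easy case above.

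\emph{Main step.} Assume first that $x,y$ (and also $y,z$ and $z,x$) do not form exceptional pairs in coordinate $j$. By \cref{lem:geodesics}(i), the projection $\pi_j(\gamma_{xy})$ is the tree geodesic $[\pi_j(x),\pi_j(y)]$, and likewise for the other sides. Since $T_j$ is a tree,
\[
[\pi_j(x),\pi_j(y)]\subseteq[\pi_j(x),\pi_j(z)]\cup[\pi_j(z),\pi_j(y)].
\]
Hence, by continuity and surjectivity of the projected paths, there is $q$ on another side with $\pi_j(q)=\pi_j(p')$. Because $\pi_j(p')$ is a vertex, \cref{lem:distance} gives
\[
d_X(p',q)=\sum_{i\ne j}d_{T_i}(\pi_i(p'),\pi_i(q))\le\sum_{i\ne j}d_i=R_j.
\]
Therefore $d_X(p,q)\le R_j+\frac12$.

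\emph{Exceptional configurations.} The step I expect to be the main obstacle is when some pair of triangle vertices forms an exceptional pair in coordinate $j$. Then the projection of that side is only known to lie in an edge $e$ of $T_j$ (\cref{lem:geodesics}(ii)), and the covering argument in $T_j$ may fail for the endpoints of $e$. My first attempt is the following. If the exceptional pair is $x,y$ itself, then $\pi_j(\gamma_{xy})\subseteq e$, so every $p\in\gamma_{xy}$ has coordinate $j$ within distance $1$ of both $\pi_j(x)$ and $\pi_j(y)$. Moreover $\pi_j(x)$ and $\pi_j(y)$ lie within distance $\frac12$ of a common endpoint of $e$, or else the detour through $e$ has length at most $1$. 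In either case we bound $d_X(p,\{x,y\})$ directly by $\sum_{i\ne j}d_i+\frac12$. This uses that $\pi_i\circ\gamma_{xy}$ is monotone for $i\ne j$, so that the $i$-th coordinate of $p$ is between those of $x$ and $y$. If instead the exceptional pair is on another side, I would replace that side's projection by the path it actually traces in $e$. This path visits an endpoint of $e$ and covers the relevant piece of $[\pi_j(x),\pi_j(y)]$, so the covering still yields a point $q$ with $\pi_j(q)=\pi_j(p')$.
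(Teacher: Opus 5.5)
Your proposal is essentially the paper's argument. Project to coordinate $j$, note that the other two sides cover $[\pi_j(x),\pi_j(y)]$ in $T_j$, pick $q$ there with the same $j$-coordinate, bound the remaining coordinates by $R_j$, and treat separately the case where $x,y$ is an exceptional pair in coordinate $j$.

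Your one real variation is snapping $p$ along $\gamma_{xy}$ to a point $p'$ with $\pi_j(p')$ a vertex. This is valid and neatly replaces the paper's subcases (2b) and (2c), but state it correctly. The sub-arc of $\gamma_{xy}$ through $p$ inside that edge has length at most $1$, and its ends are vertices of $X$ or $x$, $y$. So $p$ is within $\frac12$ of one end; what matters is which end is nearer, not whether the sub-arc is short.

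When $x,y$ is exceptional in coordinate $j$, your detour argument is heavier than needed, and its case split is vacuous since $\min\{s+t,2-s-t\}\le1$ always. \cref{lem:distance} gives $d_X(x,y)\le R_j+1$ directly, so $p$ is within $\frac12(R_j+1)$ of $x$ or $y$.

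Two points need repair.

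First, what you call the main obstacle is not one. The set $\gamma_{yz}\cup\gamma_{zx}$ is connected and contains $x$ and $y$. Hence its image under $\pi_j$ is a connected subset of the tree $T_j$ containing $\pi_j(x)$ and $\pi_j(y)$. It therefore contains $[\pi_j(x),\pi_j(y)]$, because removing an interior point of that geodesic disconnects $T_j$. This holds regardless of whether $y,z$ or $z,x$ are exceptional pairs, and it is how the paper argues. You need \cref{lem:geodesics}(i) only for $\gamma_{xy}$. Drop both your extra hypothesis on the other two sides and the sketched ``replace the projection'' step.

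Second, the displayed equality $d_X(p',q)=\sum_{i\ne j}d_{T_i}(\pi_i(p'),\pi_i(q))$ is false when $p',q$ form an exceptional pair in some coordinate $m\ne j$. This is possible for $k\ge3$ even though $\pi_j(p')=\pi_j(q)$, and then \cref{lem:distance} adds $c(p',q)>0$. The inequality $d_X(p',q)\le R_j$ still holds, because the $m$-th coordinate then contributes $|s-t|+c=\min\{s+t,2-s-t\}\le1\le d_m$. This is exactly the preliminary remark in the paper's proof.
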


\begin{proof}
Write $\varphi=\pi_j$. We first note: \emph{if $p,w\in X$ do not form an exceptional
pair in the coordinate $j$, then $d_X(p,w)\le d_{T_j}(\varphi(p),\varphi(w))+R_j$.}
Indeed, in \eqref{eq:distance} the coordinates $i\ne j$ contribute at most
$\sum_{i\ne j}d_i=R_j$, also when $p,w$ form an exceptional pair in a coordinate
$m\ne j$, because then the coordinate $m$ contributes
$|s-t|+c=\min\{s+t,2-s-t\}\le1\le d_m$.

Let $\{x,y,z\}$ be a geodesic triangle, $p$ a point of the side $\gamma=[xy]$, and
$\Gamma=[yz]\cup[zx]$, a connected set containing $x$ and $y$.

\emph{Case 1: $x,y$ form an exceptional pair in the coordinate $j$.} Then
$d_X(x,y)\le R_j+1$ by \eqref{eq:distance}, so
$\min\{d_X(p,x),d_X(p,y)\}\le\frac12 d_X(x,y)\le R_j+\frac12$, and $x,y\in\Gamma$.

\emph{Case 2: otherwise.} Let $\sigma=[\varphi(x),\varphi(y)]\subseteq T_j$. By
\cref{lem:geodesics}(i), $\varphi(p)\in\sigma$. Since $\varphi(\Gamma)$ is a connected
subset of $T_j$ containing $\varphi(x)$ and $\varphi(y)$, it contains $\sigma$.

(2a) If $\varphi(p)$ is a vertex, choose $w\in\Gamma$ with $\varphi(w)=\varphi(p)$; then
$d_X(p,w)\le R_j$ by the preliminary remark.

(2b) If $\varphi(p)$ lies in the interior of an edge $e=[t_0,t_1]$ of $T_j$ contained
in $\sigma$, at distance $s$ from $t_0$, choose $w_0,w_1\in\Gamma$ with
$\varphi(w_0)=t_0$ and $\varphi(w_1)=t_1$. By the preliminary remark,
$d_X(p,w_0)\le s+R_j$ and $d_X(p,w_1)\le 1-s+R_j$, and one of them is at most
$R_j+\frac12$.

(2c) If $\varphi(p)$ lies in the interior of an edge $e$ of $T_j$ not contained in
$\sigma$, then $\sigma\cap e$ is a proper subsegment of $e$ containing an interior
point of $e$, so an endpoint of $\sigma$, say $\varphi(x)$, is interior to $e$. If $p$
and $x$ lie on the same edge of $X$, then $d_X(p,x)<1\le R_j$. Otherwise $x,p$ form an
exceptional pair in the coordinate $j$, and by \cref{lem:distance} the subarc of
$\gamma$ from $x$ to $p$ moves the $j$-th coordinate by more than
$d_{T_j}(\varphi(x),\varphi(p))$; this contradicts the monotonicity of
$\varphi\circ\gamma$ given by \cref{lem:geodesics}(i).

In all cases $d_X(p,\Gamma)\le R_j+\frac12$.
\end{proof}

\begin{theorem}[Thin-triangle constant]\label{thm:mainrips}
Let $T_1,\dots,T_k$ be finite nontrivial trees, $k\ge2$. Then
\[
 \Rips(T_1\times\cdots\times T_k)=\min\Bigl\{\frac S2,\ S-M+\frac12\Bigr\}
 =\begin{cases} S/2, & M\le R,\\[1mm] R+1/2, & M>R.\end{cases}
\]
\end{theorem}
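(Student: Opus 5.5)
The plan is to match upper and lower bounds, splitting according to the sign of $M-R$. Since the $d_i$ are integers, so are $S$, $M$ and $R$. This makes the arithmetic identity $\min\{S/2,R+\frac12\}$ equal to $S/2$ when $M\le R$ and to $R+\frac12$ when $M>R$. Indeed $M\le R$ is equivalent to $S\le 2R$, i.e.\ $S/2\le R<R+\frac12$. Conversely, $M>R$ with integrality gives $M\ge R+1$, hence $S\ge 2R+1$, i.e.\ $R+\frac12\le S/2$. So it suffices to prove $\Rips(X)=S/2$ in the balanced regime and $\Rips(X)=R+\frac12$ in the dominant regime.

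\emph{Upper bounds.} By \eqref{eq:diambound} and \cref{cor:diameter}, $\Rips(X)\le\frac12\diam X=S/2$ always. \Cref{prop:upper} applied with $j$ an index achieving $d_j=M$ gives $\Rips(X)\le R_j+\frac12=R+\frac12$. Together these yield $\Rips(X)\le\min\{S/2,R+\frac12\}$ in every case.

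\emph{Lower bounds.} If $M\le R$, \cref{lem:balancedbigon} gives $\Rips(X)\ge S/2$, which is the minimum by the arithmetic above. If $M>R$, then $M\ge R+1$ by integrality. After relabelling the factors so that $d_1=M$ (the statement is symmetric in the factors), \cref{lem:dominantbigon} gives $\Rips(X)\ge R+\frac12$. One check is needed here: that lemma places $p$ at $\alpha_1(R+\frac12)$, which requires $R+\frac12\le d_1$. This holds since $d_1\ge R+1$.

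The main obstacle has already been handled in \cref{prop:upper} and in the two bigon lemmas. What remains is bookkeeping: the integrality step $M>R\Rightarrow M\ge R+1$ is exactly what lets the dominant-bigon lemma apply, and the symmetric relabelling must be stated. The theorem then follows by combining the two bounds in each regime, and the sharpness is witnessed by the explicit bigons.
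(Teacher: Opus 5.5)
Your proposal is correct and follows the paper's proof essentially verbatim. It uses the same integrality argument to reconcile the two expressions, the same upper bound from \eqref{eq:diambound}, \cref{cor:diameter} and \cref{prop:upper} with $d_j=M$, and the same lower bounds from \cref{lem:balancedbigon} and \cref{lem:dominantbigon}; your remarks on relabelling so that $d_1=M$ and on checking $R+\frac12\le d_1$ are harmless bookkeeping that the paper leaves implicit.
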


\begin{proof}
Since $M$ and $R$ are integers, the two expressions agree: if $M\le R$ then
$S/2\le R<R+\frac12$, and if $M>R$ then $M\ge R+1$ and $R+\frac12\le\frac S2$. The
upper bound $\min\{S/2,R+\frac12\}$ follows from \eqref{eq:diambound},
\cref{cor:diameter} and \cref{prop:upper} with $d_j=M$. The lower bound is
\cref{lem:balancedbigon} if $M\le R$ and \cref{lem:dominantbigon} if $M>R$.
\end{proof}

\begin{corollary}\label{cor:diamsharp}
Under the hypotheses of \cref{thm:mainrips}, $\Rips(X)=\frac12\diam X$ if and only if
$M\le R+1$; if $M\ge R+2$, then $\frac12\diam X-\Rips(X)=\frac{M-R-1}{2}$.
\end{corollary}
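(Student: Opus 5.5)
The corollary should follow by direct comparison of the two expressions in \cref{thm:mainrips} with $\frac12\diam X=\frac S2$, the latter coming from \cref{cor:diameter}. I will use $S=M+R$ throughout and the fact that $M$ and $R$ are integers, because the trees have unit edges and finite diameters.

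For the equivalence, \cref{thm:mainrips} gives $\Rips(X)=\min\{S/2,\,R+\frac12\}$. Hence $\Rips(X)=S/2$ holds exactly when $S/2\le R+\frac12$. Substituting $S=M+R$, this becomes $M+R\le 2R+1$, that is, $M\le R+1$. So $\Rips(X)=\frac12\diam X$ if and only if $M\le R+1$.

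The case $M=R+1$ needs no separate treatment. There $M>R$, so the theorem's case formula gives $R+\frac12$. Since $S/2=(2R+1)/2=R+\frac12$, this agrees with the min formula.

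For the gap formula, suppose $M\ge R+2$. Then $M>R$, so $\Rips(X)=R+\frac12$. Therefore
\[
\frac12\diam X-\Rips(X)=\frac{M+R}{2}-R-\frac12=\frac{M-R-1}{2},
\]
which is positive because $M\ge R+2$.

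No step is a real obstacle: everything reduces to \cref{thm:mainrips} and \cref{cor:diameter}. The only point needing care is the boundary case $M=R+1$, which lies in the dominant regime yet still attains the diameter bound. This is exactly why the threshold in the corollary is $R+1$ rather than $R$.
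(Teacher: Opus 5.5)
Your proposal is correct and matches the paper's (unwritten) argument: the corollary is stated without proof because it follows immediately from comparing $\min\{S/2,\,R+\frac12\}$ in \cref{thm:mainrips} with $\frac12\diam X=S/2$ from \cref{cor:diameter}, exactly as you do. One small remark: the equivalence $S/2\le R+\frac12\iff M\le R+1$ holds for arbitrary real $M,R$, so your appeal to integrality of $M$ and $R$ is harmless but not actually needed.
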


\begin{remark}\label{rem:newrips}
For $k=2$, \cref{thm:mainrips} gives $\Rips(T_1\times T_2)=d_1$ if $d_1=d_2$ and
$\min\{d_1,d_2\}+\frac12$ otherwise, in agreement with
\cite{MichelRodriguezSigarretaVilleta2010}; the dominant case also follows from
\cite[Theorem~24]{MichelRodriguezSigarretaVilleta2010}, which states that
$\Rips(T\times G)=\diamV G+\frac12$ for a tree $T$ and a nontrivial graph $G$ with
$\diam T>\diamV G$. For $k\ge3$ the transition is governed by the comparison of the
largest diameter with the \emph{sum} of the others, which is invisible in a
two-factor formula.
\end{remark}

\section{The four-point constant}\label{sec:four}

We work in a more general setting. The \emph{$\ell_1$-product} of metric spaces
$(Y_i,\rho_i)$ is $Y_1\times\cdots\times Y_k$ with the metric
$\rho(x,y)=\sum_i\rho_i(x_i,y_i)$. By \eqref{eq:distance}, $V(X)$ with its graph
metric is the $\ell_1$-product of the vertex sets $V(T_i)$; the same holds for
products of trees with positive edge weights.

\begin{lemma}[Deficit of a tree-like quadruple]\label{lem:deficit}
Let $(Y,\rho)$ be a metric space with $\four(Y)=0$ and $x,y,z,w\in Y$. There are
$\lambda\ge0$, $\varepsilon\in[0,\diam Y]$ and $r\in\{1,2,3\}$ such that
$\sigma_r=\lambda-2\varepsilon$ and the other two sums equal $\lambda$.
\end{lemma}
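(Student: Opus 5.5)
Since $\four(Y)=0$, the two largest of the three sums $\sigma_1,\sigma_2,\sigma_3$ coincide for every quadruple, as recalled in \cref{sec:prelim}. Call this common value $\lambda$ and let $r$ be the index of the remaining sum. Define $\varepsilon=(\lambda-\sigma_r)/2$, so that $\sigma_r=\lambda-2\varepsilon$ and the other two sums equal $\lambda$. Then $\lambda\ge0$ because distances are nonnegative, and $\varepsilon\ge0$ because $\sigma_r\le\lambda$. If all three sums are equal, any $r$ works and $\varepsilon=0$.

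It remains to show $\varepsilon\le\diam Y$. Relabel so that $r=1$, i.e.\ $\sigma_1=d(x,y)+d(z,w)$ is the smallest sum. Then
\[
2\varepsilon=\lambda-\sigma_1=\tfrac12(\sigma_2+\sigma_3)-\sigma_1 .
\]
To bound this, I would apply the triangle inequality twice: $d(x,z)\le d(x,y)+d(y,z)$ and $d(y,w)\le d(y,z)+d(z,w)$. Adding these gives $\sigma_2\le\sigma_1+2d(y,z)$, and hence $\lambda-\sigma_1\le 2d(y,z)\le 2\diam Y$. This yields $\varepsilon\le d(y,z)\le\diam Y$.

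If $\diam Y=\infty$, the bound is vacuous and the constraint reduces to $\varepsilon\ge0$. Nothing here is a real obstacle. The only care needed is to verify that the triangle-inequality estimate uses the correct pair, since $\sigma_2$ and $\sigma_3$ share the term $d(y,z)$ with $\sigma_3$ and share nothing with $\sigma_1$.
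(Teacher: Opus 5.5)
Your proof is correct and follows the paper's structure: $\four(Y)=0$ makes the two largest sums equal, you call their common value $\lambda$, and you define $\varepsilon$ from the third sum.

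The only difference is in how you get $\varepsilon\le\diam Y$. The paper uses $0\le\sigma_r$ and $\lambda\le 2\diam Y$ directly, so $2\varepsilon=\lambda-\sigma_r\le\lambda\le2\diam Y$. Your triangle-inequality estimate $\sigma_2\le\sigma_1+2d(y,z)$ gives the sharper bound $\varepsilon\le d(y,z)$. In fact $\varepsilon$ is at most each of the four distances not occurring in $\sigma_r$. This is more than the product theorem needs, but it is valid.
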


\begin{proof}
The two largest sums are equal; call them $\lambda$ and write the third as
$\lambda-2\varepsilon$. The three sums lie in $[0,2\diam Y]$, so
$\varepsilon\le\diam Y$.
\end{proof}

\begin{theorem}[Four-point constant of $\ell_1$-products]\label{thm:fourpoint}
Let $Y_1,\dots,Y_k$ be metric spaces with $\four(Y_i)=0$ and finite diameters
$D_1,\dots,D_k$, and let $Y$ be their $\ell_1$-product. Then
$\four(Y)=\cP(D_1,\dots,D_k)$. If every $D_i$ is attained by a pair $a_i,b_i$, the
supremum defining $\four(Y)$ is attained by the quadruple \eqref{eq:quadruple} below.
\end{theorem}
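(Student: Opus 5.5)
The plan is to prove the two inequalities $\four(Y)\le\cP$ and $\four(Y)\ge\cP$ separately. The upper bound comes from decomposing the three sums coordinatewise using \cref{lem:deficit}. The lower bound comes from an explicit quadruple built from diametral pairs.

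\emph{Upper bound.} Fix $x,y,z,w\in Y$. For each coordinate $i$, apply \cref{lem:deficit} to $x_i,y_i,z_i,w_i\in Y_i$. This gives $\lambda_i\ge0$, $\varepsilon_i\in[0,D_i]$ and an index $r_i\in\{1,2,3\}$ such that the $i$-th contribution to $\sigma_{r_i}$ is $\lambda_i-2\varepsilon_i$, while the contributions to the other two sums equal $\lambda_i$. Since the metric is additive, with $\Lambda=\sum_i\lambda_i$ and $A_r=\sum_{i:r_i=r}\varepsilon_i$ for $r=1,2,3$,
\[
\sigma_r=\Lambda-2A_r,\qquad r=1,2,3.
\]
Order the sums so that $A_{(1)}\le A_{(2)}\le A_{(3)}$. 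The largest sum minus the second largest is then $2(A_{(2)}-A_{(1)})$, so $\four(x,y,z,w)=A_{(2)}-A_{(1)}\le A_{(2)}$. Let $E=\{i:r_i=(2)\}$. Then $A_{(2)}\le\sum_{i\in E}D_i$. Also $A_{(2)}\le A_{(3)}\le\sum_{i\notin E}D_i$, because the indices counted in $A_{(3)}$ lie outside $E$. Hence $\four(x,y,z,w)\le\min\{\sum_{E}D_i,\sum_{E^c}D_i\}\le\cP$. When a supremum is not attained, the same bound holds pointwise, so taking the supremum gives $\four(Y)\le\cP$.

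\emph{Lower bound.} Choose $E$ attaining $\cP$. For each $i$, take pairs $a_i,b_i$ with $\rho_i(a_i,b_i)\ge D_i-\epsilon$, or exactly $D_i$ when the diameter is attained. Define the quadruple
\begin{equation}\label{eq:quadruple}
x=(a_i)_i,\quad y=(b_i)_i,\quad z_i=\begin{cases}b_i,&i\in E\\ a_i,&i\notin E\end{cases},\quad w_i=\begin{cases}a_i,&i\in E\\ b_i,&i\notin E\end{cases}.
\end{equation}
This is the grid-corner configuration. Write $\Sigma_E=\sum_{i\in E}D_i$ and $\Sigma_{E^c}=\sum_{i\notin E}D_i$. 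Then $\sigma_1=d(x,y)+d(z,w)=2S$, $\sigma_2=d(x,z)+d(y,w)=2\Sigma_E$, and $\sigma_3=d(x,w)+d(y,z)=2\Sigma_{E^c}$, all up to an error of order $k\epsilon$. The defect is therefore $S-\max\{\Sigma_E,\Sigma_{E^c}\}=\min\{\Sigma_E,\Sigma_{E^c}\}=\cP$. Letting $\epsilon\to0$ gives $\four(Y)\ge\cP$, with attainment in the attained-diameter case.

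The main obstacle is the upper bound bookkeeping. One has to see that the defect is controlled by the middle deficit mass $A_{(2)}$ and that this mass is bounded by both sides of a single partition. The key observation is that each coordinate contributes to only one $A_r$, so the indices supporting $A_{(2)}$ and $A_{(3)}$ are disjoint. Everything else is routine.
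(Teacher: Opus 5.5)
Your proof is correct and follows the paper's argument essentially verbatim: the upper bound uses the same coordinatewise decomposition via \cref{lem:deficit}, with $E$ taken as the set of indices of the middle deficit type. The lower bound uses the same grid-corner quadruple; yours is the paper's with the roles of $y$ and $w$ swapped, which does not change the defect, and the same limiting argument handles non-attained diameters.
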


\begin{proof}
\emph{Upper bound.} Fix $x,y,z,w\in Y$. By \cref{lem:deficit} applied in each factor
there are $\lambda_i\ge0$, $\varepsilon_i\in[0,D_i]$ and $r_i\in\{1,2,3\}$ such that
the sum of type $r_i$ in the factor $i$ is $\lambda_i-2\varepsilon_i$ and the other
two are $\lambda_i$. Put $\lambda=\sum_i\lambda_i$ and
$e_r=\sum_{i:r_i=r}\varepsilon_i$. Since the metric is additive,
$\sigma_r=\lambda-2e_r$ for $r=1,2,3$. Relabel the types so that
$e_1\le e_2\le e_3$; then $\four(x,y,z,w)=e_2-e_1\le e_2$. For $E=\{i:r_i=2\}$ we
have $\sum_{i\in E}D_i\ge e_2$, and the complement of $E$ contains the indices of
type $3$, so $\sum_{i\notin E}D_i\ge e_3\ge e_2$. Hence
$\four(x,y,z,w)\le e_2\le\cP(D_1,\dots,D_k)$.

\emph{Lower bound.} Let $E$ attain $\cP$, and let $a_i,b_i\in Y_i$ with
$\rho_i(a_i,b_i)=D'_i$. Define
\begin{equation}\label{eq:quadruple}
 (x_i,y_i,z_i,w_i)=\begin{cases}(a_i,a_i,b_i,b_i), & i\in E,\\
 (a_i,b_i,a_i,b_i), & i\notin E.\end{cases}
\end{equation}
The three sums in the factor $i$ are $(0,2D'_i,2D'_i)$ if $i\in E$ and
$(2D'_i,0,2D'_i)$ otherwise, so in $Y$ they are $2D'_{E^c}$, $2D'_E$ and
$2(D'_E+D'_{E^c})$, with the obvious notation, and
$\four(x,y,z,w)=\min\{D'_E,D'_{E^c}\}$. Letting $D'_i\to D_i$ gives
$\four(Y)\ge\min\{\sum_{i\in E}D_i,\sum_{i\notin E}D_i\}=\cP(D_1,\dots,D_k)$.
\end{proof}

\begin{corollary}[Products of trees]\label{cor:fourtrees}
If $T_1,\dots,T_k$ are finite trees, with unit or with arbitrary positive edge
lengths, then $\four(V(T_1\times\cdots\times T_k))=\cP(d_1,\dots,d_k)$, where $d_i$ is
the diameter of $T_i$.
\end{corollary}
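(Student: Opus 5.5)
The corollary will follow by specializing \cref{thm:fourpoint} to the factors $Y_i=V(T_i)$ with the path metric $\rho_i=d_{T_i}$ restricted to vertices. Three hypotheses have to be checked: that $V(X)$ is the $\ell_1$-product of the $Y_i$, that each $Y_i$ is $0$-hyperbolic, and that each $Y_i$ has finite diameter $D_i=d_i$ attained by a pair of vertices.

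\emph{The product structure.} For unit edges this is \eqref{eq:distance}. Between vertices there is no exceptional pair, so $c=0$ and $d_X(u,v)=\sum_i d_{T_i}(u_i,v_i)$.

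For positive edge weights I will redo the lower-bound part of the proof of \cref{lem:distance} at the level of vertex paths. A path in $X$ is a sequence of edges, each changing exactly one coordinate by one edge of the corresponding factor. Its length therefore splits as $\sum_i L_i$, where $L_i$ is the weighted length of a walk in $T_i$ from $u_i$ to $v_i$, so $L_i\ge d_{T_i}(u_i,v_i)$. A sequential path, which changes coordinates one at a time along tree geodesics as in \cref{lem:sequential}, attains the sum. Hence the vertex metric of $X$ is again the $\ell_1$-metric.

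\emph{Zero hyperbolicity of the factors.} The vertex set of a weighted tree embeds isometrically in its geodesic realization, which is an $\R$-tree. By the characterization recalled in \cref{sec:prelim}, $\four(V(T_i))=0$. If one wants a direct argument instead, I would verify the four-point equality using the median of three vertices and the fact that two of the three pair-sums coincide in a tree.

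\emph{Finite, attained diameters.} Since $T_i$ is finite, the diameter $d_i$ of $V(T_i)$ is a maximum over finitely many pairs. It is attained by vertices $a_i,b_i$, and it equals the diameter of $T_i$, as noted in \cref{sec:prelim}. \Cref{thm:fourpoint} then gives $\four(V(X))=\cP(d_1,\dots,d_k)$, with the extremal quadruple \eqref{eq:quadruple} built from diametral vertex pairs.

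\emph{Main obstacle.} The only step that is more than bookkeeping is the $\ell_1$-decomposition for weighted edges, since \cref{lem:distance} was stated for unit edges. I expect the path-splitting argument above to settle it immediately, because only vertices are involved and no exceptional pairs occur.
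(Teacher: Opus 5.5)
Your proposal is correct and follows the paper's route: the paper proves the corollary by applying \cref{thm:fourpoint} to the tree metrics $V(T_i)$. For this it uses that the vertex metric of $X$ is their $\ell_1$-product, which it derives from \eqref{eq:distance} for unit edges and simply asserts for positive edge weights. Your path-splitting argument for weighted edges, and your check that each $d_i$ is attained by a pair of vertices, fill in details the paper leaves implicit.
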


\begin{proof}
Apply \cref{thm:fourpoint} to the vertex sets $V(T_i)$, which are tree metrics.
\end{proof}

\begin{remark}
(1) For products of paths $P_{n_i}$, $d_i=n_i-1$ and \cref{cor:fourtrees} is the
multidimensional grid formula of Coudert and Ducoffe
\cite[Proposition~47]{CoudertDucoffe2016}.
(2) The median-graph theory of \cite{ChepoiDraganEstellonHabibVaxes2008} relates
four-point hyperbolicity to isometrically embedded square grids. The proof above is
more precise for $\ell_1$-products of tree-like spaces: each factor contributes only a
\emph{type} and a deficit bounded by its diameter.
\end{remark}

\begin{corollary}[Diameter-only invariance]\label{cor:diamonly}
If $T_1,\dots,T_k$ and $T'_1,\dots,T'_k$ are finite nontrivial trees, $k\ge2$, with
$\diam T_i=\diam T'_i$ for all $i$, then the two products have the same Rips constant
and the same vertex four-point constant.
\end{corollary}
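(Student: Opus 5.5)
This corollary is a direct consequence of the two sharp formulas already established, so the plan is simply to observe that both constants are explicit functions of the diameter vector $(d_1,\dots,d_k)$ alone. Write $d_i=\diam T_i=\diam T'_i$. The quantities $S$, $M$ and $R=S-M$ are then the same for both families of trees, since they are defined from the $d_i$ only.

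For the Rips constant, both products satisfy the hypotheses of \cref{thm:mainrips}: finitely many finite nontrivial trees, with $k\ge2$. Applying that theorem to each product gives
\[
 \Rips(T_1\times\cdots\times T_k)=\min\Bigl\{\frac S2,\ S-M+\frac12\Bigr\}=\Rips(T'_1\times\cdots\times T'_k).
\]
For the four-point constant, \cref{cor:fourtrees} gives $\four(V(T_1\times\cdots\times T_k))=\cP(d_1,\dots,d_k)$, and the same value for the primed product. The two vertex four-point constants therefore coincide as well.

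There is no real obstacle here. The only points to check are that the hypotheses of each cited result hold: nontriviality and $k\ge2$ are needed for \cref{thm:mainrips}, while \cref{cor:fourtrees} needs only finiteness. One should also note that the diameter used in both results is the graph diameter $d_i$ of the factor, which for a tree agrees with the vertex diameter. All the substantive work, namely that branching does not affect either constant, was already done in the proofs of the two formulas.
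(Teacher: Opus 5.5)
Your proposal is correct and follows the paper's own proof: both constants are given by \cref{thm:mainrips} and \cref{cor:fourtrees} as explicit functions of the diameter vector alone, so equal diameters force equal constants. Your check of the hypotheses (finiteness, nontriviality and $k\ge2$ for the Rips formula; finiteness for the four-point formula) is accurate.
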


\begin{proof}
The formulas of \cref{thm:mainrips,cor:fourtrees} depend only on the diameters.
\end{proof}

For instance, the path $P_4$ and the double star $D_{2,2}$ of \cref{fig:branching}
have diameter $3$, so either can replace the other in any product without changing
$\Rips$ or $\four$.

\begin{figure}[ht]
\centering
\begin{tikzpicture}
  \begin{scope}[xshift=0cm]
    \foreach \i in {0,1,2,3}{\node (p\i) at (1.1*\i,0) [circle,fill,inner sep=1.6pt] {};}
    \draw (p0)--(p1)--(p2)--(p3);
    \node at (1.65,-0.6) {$P_4$, $\diam=3$};
  \end{scope}
  \begin{scope}[xshift=6.0cm]
    \node (c0) at (0,0) [circle,fill,inner sep=1.6pt] {};
    \node (c1) at (1.5,0) [circle,fill,inner sep=1.6pt] {};
    \node (a1) at (-0.8,0.7) [circle,fill,inner sep=1.6pt] {};
    \node (a2) at (-0.8,-0.7) [circle,fill,inner sep=1.6pt] {};
    \node (b1) at (2.3,0.7) [circle,fill,inner sep=1.6pt] {};
    \node (b2) at (2.3,-0.7) [circle,fill,inner sep=1.6pt] {};
    \draw (c0)--(c1) (c0)--(a1) (c0)--(a2) (c1)--(b1) (c1)--(b2);
    \node at (0.75,-1.25) {$D_{2,2}$, $\diam=3$};
  \end{scope}
\end{tikzpicture}
\caption{Two trees with the same diameter and different branching.}
\label{fig:branching}
\end{figure}

\section{Comparison of the two constants}\label{sec:comparison}

\begin{theorem}[Exact gap]\label{thm:exactgap}
Let $T_1,\dots,T_k$ be finite nontrivial trees, $k\ge2$, and
$\eta=\eta(d_1,\dots,d_k)$. Then $\four(V(X))=\frac{S-\eta}{2}$ and
\begin{equation}\label{eq:gap}
 \Rips(X)-\four(V(X))=\begin{cases}\eta/2, & M\le R,\\[1mm] 1/2, & M>R.\end{cases}
\end{equation}
If $M>R$, then $\eta=M-R$ and $\four(V(X))=R$.
\end{theorem}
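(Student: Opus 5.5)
The plan is to combine \cref{thm:mainrips} with \cref{cor:fourtrees}, after first identifying $\cP$ with $(S-\eta)/2$ and computing $\eta$ explicitly in the dominant regime.

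\emph{Step 1: $\cP=(S-\eta)/2$.} For a subset $E$ write $A_E=\sum_{i\in E}d_i$, so the complementary sum is $S-A_E$. Then
\[
\min\{A_E,S-A_E\}=\frac{S-|2A_E-S|}{2}=\frac{S-\bigl|A_E-(S-A_E)\bigr|}{2}.
\]
Taking the maximum over $E$ on the left is the same as taking the minimum of $|A_E-(S-A_E)|$ on the right. This gives $\cP=(S-\eta)/2$, and \cref{cor:fourtrees} then yields $\four(V(X))=(S-\eta)/2$.

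\emph{Step 2: the balanced regime $M\le R$.} By \cref{thm:mainrips}, $\Rips(X)=S/2$. Subtracting $(S-\eta)/2$ gives exactly $\eta/2$.

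\emph{Step 3: the dominant regime $M>R$.} This is the only place needing an argument. Let $j$ be an index with $d_j=M$. By the symmetry $E\leftrightarrow E^c$ we may assume $j\in E$. Then
\[
A_E-(S-A_E)\ \ge\ M-(S-A_E)\ \ge\ M-R>0,
\]
since the complement of $E$ consists of indices other than $j$, whose diameters sum to at most $R$. The choice $E=\{j\}$ attains the value $M-R$. Hence $\eta=M-R$ and $\four(V(X))=(S-(M-R))/2=R$. Since $\Rips(X)=R+\tfrac12$ by \cref{thm:mainrips}, the gap is $\tfrac12$, which proves \eqref{eq:gap} together with the final assertion.

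No step is a real obstacle. The only point needing care is Step 3, namely that no partition does better than isolating the largest diameter, and the one-line inequality above settles it.
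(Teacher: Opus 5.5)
Your proposal is correct and follows the paper's own proof: the identity $\min\{q,S-q\}=\frac12(S-|S-2q|)$ combined with \cref{cor:fourtrees}, then \cref{thm:mainrips} in each regime, with $\eta=M-R$ obtained by bounding every partition that contains an index of maximal diameter. Your explicit appeal to the symmetry $E\leftrightarrow E^c$ spells out a step the paper leaves implicit.
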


\begin{proof}
Since $\min\{q,S-q\}=\frac12(S-|S-2q|)$, \cref{cor:fourtrees} gives
$\four(V(X))=(S-\eta)/2$. If $M\le R$, \cref{thm:mainrips} gives $\Rips(X)=S/2$. If
$M>R$ and $d_j=M$, every $E\ni j$ satisfies
$\sum_{i\in E}d_i-\sum_{i\notin E}d_i\ge M-R>0$, with equality for $E=\{j\}$; hence
$\eta=M-R$, $\four(V(X))=R$ and $\Rips(X)=R+\frac12$.
\end{proof}

\begin{corollary}[Equality criterion]\label{cor:equalitycriteria}
$\Rips(X)=\four(V(X))$ if and only if $\eta=0$, i.e.\ the diameters can be split into
two groups with equal sums.
\end{corollary}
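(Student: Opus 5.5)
This follows directly from \cref{thm:exactgap}. The plan is to read off equation \eqref{eq:gap} in the two regimes and check that the gap vanishes exactly when $\eta=0$.

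First, the balanced regime $M\le R$. Here \eqref{eq:gap} gives $\Rips(X)-\four(V(X))=\eta/2$. This is zero if and only if $\eta=0$, so the criterion holds in this regime.

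Second, the dominant regime $M>R$. Here the gap equals $\frac12\neq0$, so $\Rips(X)\ne\four(V(X))$. We must also check that $\eta\neq0$, so that both sides of the equivalence are false. \Cref{thm:exactgap} gives $\eta=M-R\ge1>0$ in this regime, as required. Alternatively, one can argue directly: any subset $E$ contains the index $j$ with $d_j=M$ or its complement does, and then the imbalance is at least $M-R>0$.

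Finally, we note that $\eta=0$ means by definition that some $E$ satisfies $\sum_{i\in E}d_i=\sum_{i\notin E}d_i$. This is exactly the statement that the diameters split into two groups with equal sums. Nothing here is a real obstacle; the only point requiring care is the dominant case, where one must confirm that $\eta$ cannot vanish. The identity $\eta=M-R$ from \cref{thm:exactgap} settles this.
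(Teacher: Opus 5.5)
Your proof is correct and follows essentially the paper's argument: both read off \eqref{eq:gap} in the two regimes. The only difference is cosmetic. The paper shows directly that $\eta=0$ forces $M\le R$, whereas you prove the contrapositive, $\eta=M-R>0$ when $M>R$.
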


\begin{proof}
If $\eta=0$ then $M\le S/2$, i.e.\ $M\le R$, and the gap is $0$; conversely the gap
vanishes only if $M\le R$ and $\eta=0$.
\end{proof}

\begin{corollary}[Unbounded separation]\label{cor:separation}
For every odd $k\ge3$ and every $D\in\N$, if $T$ is a tree of diameter $2D$, then
$\Rips(T^{\times k})-\four(V(T^{\times k}))=D$.
\end{corollary}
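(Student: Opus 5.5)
The plan is to apply \cref{thm:exactgap} with all diameters equal to $2D$. Put $d_1=\cdots=d_k=2D$. Then
\[
S=2Dk,\qquad M=2D,\qquad R=S-M=2D(k-1).
\]
Since $k\ge3$, we have $M=2D\le 2D(k-1)=R$, so we are in the balanced regime of \eqref{eq:gap}. The gap therefore equals $\eta/2$, and it remains to show that $\eta(2D,\dots,2D)=2D$ when $k$ is odd.

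To compute $\eta$, take any $E$ with $|E|=e$. Then
\[
\sum_{i\in E}d_i-\sum_{i\notin E}d_i=2D(2e-k).
\]
Because $k$ is odd, $|2e-k|$ is an odd integer, so it is at least $1$. The value $1$ is attained by $e=(k-1)/2$. Hence $\eta=2D$, and the gap equals $\eta/2=D$.

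As a consistency check, \cref{thm:mainrips} gives $\Rips=S/2=Dk$, and \cref{cor:fourtrees} gives $\four=\cP=2D(k-1)/2=D(k-1)$, whose difference is $D$.

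There is essentially no obstacle here. The only hypotheses to confirm are that $T$ is finite and nontrivial, which holds because a tree of diameter $2D$ with $D\ge1$ has at least two vertices. The degenerate case $D=0$ does not fit the standing nontriviality assumption, so I would read $D\in\N$ as $D\ge1$ or simply note that this case is excluded.
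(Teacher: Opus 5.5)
Your proof is correct and is essentially the paper's own argument: with all diameters equal to $2D$ you get $M\le R$, so \cref{thm:exactgap} gives a gap of $\eta/2$, and $\eta=2D$ because $k$ is odd (your parity computation of $|2e-k|$ supplies the detail the paper leaves implicit). One minor point: a finite diameter does not force $T$ to be finite (an infinite star has diameter $2$), so finiteness has to be taken from the standing hypothesis, as the paper does implicitly, rather than deduced as you suggest.
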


\begin{proof}
All diameters equal $2D$, so $M\le R$ and, $k$ being odd, $\eta=2D$.
\end{proof}

\section{The $\ell_1$-product and the origin of the half unit}\label{sec:l1}

Let $Y_1,\dots,Y_k$ be compact $\R$-trees of diameters $d_1,\dots,d_k$, and let
$Y$ be their $\ell_1$-product. When $Y_i$ is the geodesic realization of a finite tree
$T_i$, $Y$ is the cube complex $|X|_1$ of $X=T_1\times\cdots\times T_k$, in which
every product of edges is filled in by a cube carrying the $\ell_1$-metric, and $X$ is
its $1$-skeleton. Concatenating geodesics of the factors shows that $Y$ is a geodesic
space with $\diam Y=S$.

\begin{lemma}\label{lem:l1geodesics}
Let $\gamma$ be a geodesic of $Y$ from $x$ to $y$. Then every projection
$\pi_i\circ\gamma$ traverses the geodesic $[\pi_i(x),\pi_i(y)]$ of $Y_i$
monotonically.
\end{lemma}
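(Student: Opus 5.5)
The plan is to copy the length-additivity argument used in \cref{lem:geodesics}(i), which is simpler here because the metric of $Y$ is exactly additive and there are no exceptional pairs. Let $\gamma\colon[0,L]\to Y$ be a geodesic, with $L=\rho(x,y)=\sum_i\rho_i(\pi_i(x),\pi_i(y))$. The first step is to show that for every rectifiable path in $Y$ the length is the sum of the lengths of its projections, or at least that
\[
L(\gamma)\ge\sum_i L_i(\gamma).
\]
Here $L_i(\gamma)$ denotes the length of $\pi_i\circ\gamma$ in $Y_i$. This follows directly from the definition of length as a supremum over partitions. For a partition $0=t_0<\dots<t_n=L$,
\[
\sum_r\rho(\gamma(t_{r-1}),\gamma(t_r))=\sum_i\sum_r\rho_i(\pi_i\gamma(t_{r-1}),\pi_i\gamma(t_r)).
\]
Take a common refinement of near-optimal partitions for each of the finitely many coordinates; since refining does not decrease any of these sums, the left side can be made at least $\sum_iL_i(\gamma)-\epsilon$. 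In particular each $L_i(\gamma)$ is finite.

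The second step combines this with the trivial bounds $L_i(\gamma)\ge\rho_i(\pi_i(x),\pi_i(y))$. This gives
\[
\sum_i\rho_i(\pi_i(x),\pi_i(y))=L(\gamma)\ge\sum_iL_i(\gamma)\ge\sum_i\rho_i(\pi_i(x),\pi_i(y)),
\]
so equality holds throughout and $L_i(\gamma)=\rho_i(\pi_i(x),\pi_i(y))$ for every $i$. The same argument applied to any subarc $\gamma|_{[s,t]}$, which is again a geodesic, gives $L_i(\gamma|_{[s,t]})=\rho_i(\pi_i\gamma(s),\pi_i\gamma(t))$.

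The last step is the $\R$-tree fact already used in \cref{lem:geodesics}: a path in an $\R$-tree whose length equals the distance between its endpoints is a monotone parametrization of the unique geodesic joining them. I would justify this briefly. If the path left the arc $[\pi_i(x),\pi_i(y)]$, it would have to return through the same point, since $\R$-trees are uniquely arcwise connected and any path between two points covers the arc joining them. The excursion would then add positive length, which is impossible. Hence the image is contained in the arc. Length additivity on subarcs then makes the distance from $\pi_i(x)$ nondecreasing along the path, which is monotonicity. The only point needing any care is the length-additivity in the first step; there I would rely on the finite number of factors to pass to common refinements, and everything else is routine.
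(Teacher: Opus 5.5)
Your proof is correct and follows the paper's argument: length additivity across the finitely many coordinates via common refinements of partitions, the squeeze $\sum_i\rho_i(\pi_i(x),\pi_i(y))=L(\gamma)\ge\sum_iL_i(\gamma)\ge\sum_i\rho_i(\pi_i(x),\pi_i(y))$, and then the $\R$-tree fact that a path whose length equals the distance between its endpoints is a monotone parametrization of the geodesic. The only difference is that you justify that last fact (the excursion argument and the subarc version of the length equality), whereas the paper simply invokes it.
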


\begin{proof}
For a curve in an $\ell_1$-product, taking common refinements of partitions shows
that $L(\gamma)=\sum_iL(\pi_i\circ\gamma)$. Since
$L(\gamma)=\rho(x,y)=\sum_i\rho_i(\pi_i(x),\pi_i(y))$ and
$L(\pi_i\circ\gamma)\ge\rho_i(\pi_i(x),\pi_i(y))$, all these inequalities are
equalities, and a path in an $\R$-tree of length equal to the distance between its
endpoints is a monotone parametrization of the geodesic.
\end{proof}

\begin{theorem}\label{thm:l1}
Let $Y$ be the $\ell_1$-product of compact $\R$-trees $Y_1,\dots,Y_k$ of positive
diameters $d_1,\dots,d_k$, $k\ge2$. Then
\[
 \Rips(Y)=\min\Bigl\{\frac S2,\ S-M\Bigr\},\quad \four(Y)=\cP(d_1,\dots,d_k).
\]
Consequently $\Rips(Y)-\four(Y)$ equals $\eta/2$ if $M\le R$ and $0$ if $M>R$.
\end{theorem}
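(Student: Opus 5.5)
The four-point formula is immediate from \cref{thm:fourpoint}, since each compact $\R$-tree $Y_i$ has $\four(Y_i)=0$ and diameter $d_i$, attained by a pair $a_i,b_i$. The consequence about $\Rips(Y)-\four(Y)$ then follows exactly as in the proof of \cref{thm:exactgap}: $\four(Y)=(S-\eta)/2$; if $M\le R$ the difference is $S/2-(S-\eta)/2=\eta/2$; if $M>R$ then $\eta=M-R$ and $\four(Y)=R=S-M=\Rips(Y)$. So the work is the Rips formula, which I would prove by the same upper/lower bound scheme as \cref{thm:mainrips}, now without the exceptional-pair correction.

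\emph{Upper bound.} Here $\Rips(Y)\le S/2$ by \eqref{eq:diambound} and $\diam Y=S$. For $\Rips(Y)\le S-d_j$ for every $j$, I would imitate \cref{prop:upper} with $\varphi=\pi_j$. Let $\{x,y,z\}$ be a geodesic triangle, $p\in[xy]$ and $\Gamma=[yz]\cup[zx]$. By \cref{lem:l1geodesics}, $\varphi(p)\in\sigma=[\varphi(x),\varphi(y)]$. Since $\varphi(\Gamma)$ is connected and contains $\varphi(x),\varphi(y)$, it contains $\sigma$, so there is $w\in\Gamma$ with $\varphi(w)=\varphi(p)$. Then $\rho(p,w)=\sum_{i\ne j}\rho_i(\pi_i(p),\pi_i(w))\le S-d_j$. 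Because every point of $Y_j$ is allowed (no $1$-skeleton restriction), no half-unit appears and no case analysis is needed. Taking $d_j=M$ gives $\Rips(Y)\le\min\{S/2,S-M\}$.

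\emph{Lower bound.} In the balanced case $M\le R$, I would reuse the bigon of \cref{lem:balancedbigon} verbatim, with diametral geodesics $\alpha_i$ in the $\R$-trees. Concatenation gives geodesics of length $S=\rho(a,b)$. The estimate $\Sigma\ge S/2$ there uses only the $\ell_1$ sum of coordinate distances, which is now exactly $\rho(p,q)$, so $\rho(p,\gamma^-)=S/2$.

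In the dominant case $M>R$ with $d_1=M$, I would take $\gamma^-$ in the order $1,2,\dots,k$ and $\gamma^+$ in the order $2,\dots,k,1$. Let $p\in\gamma^-$ have $\pi_1(p)=\alpha_1(R)$ and $\pi_i(p)=a_i$ for $i\ge2$. For $q\in\gamma^+$ there are two cases:
\begin{itemize}
\item if $\pi_1(q)=a_1$, then $\rho(p,q)\ge\rho_1(\alpha_1(R),a_1)=R$;
\item otherwise $\pi_i(q)=b_i$ for all $i\ge2$, so $\rho(p,q)\ge\sum_{i\ge2}d_i=R$.
\end{itemize}
Hence $\Rips(Y)\ge R=S-M$.

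The main obstacle is making sure the Rips upper bound argument is legitimate in $Y$, which is not a graph. This needs three facts: geodesic triangles have sides whose projections are monotone (\cref{lem:l1geodesics}), $\Gamma$ is connected so its projection covers $\sigma$, and the $\ell_1$ distance splits exactly. Each of these is clean here, which is precisely why the half-unit disappears.
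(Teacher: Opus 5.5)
Your proposal is correct and follows the paper's proof step for step: the four-point formula comes from \cref{thm:fourpoint}, the bound $\Rips(Y)\le S-d_j$ comes from projecting to $Y_j$ via \cref{lem:l1geodesics} and the connectedness of $\pi_j(\Gamma)$, and the lower bounds come from the balanced and dominant bigons. The only differences are cosmetic: you place $p$ at $\pi_1(p)=\alpha_1(R)$ instead of at $\alpha_1(\tau)$ with $R<\tau\le M$, which still gives $\rho(p,q)\ge R$ in both cases, and you spell out the computation of $\Rips(Y)-\four(Y)$, which the paper leaves implicit.
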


\begin{proof}
The four-point formula is \cref{thm:fourpoint}.

\emph{Upper bound.} $\Rips(Y)\le S/2$ by \eqref{eq:diambound}. Fix $j$; let
$\{x,y,z\}$ be a geodesic triangle, $p\in[xy]$ and $\Gamma=[yz]\cup[zx]$. By
\cref{lem:l1geodesics}, $\pi_j(p)\in[\pi_j(x),\pi_j(y)]$, and this geodesic is
contained in the connected set $\pi_j(\Gamma)$. Choose $w\in\Gamma$ with
$\pi_j(w)=\pi_j(p)$; then $\rho(p,w)=\sum_{i\ne j}\rho_i(\pi_i(p),\pi_i(w))\le S-d_j$.
Hence $\Rips(Y)\le S-M$.

\emph{Lower bound.} If $M\le R$, the proof of \cref{lem:balancedbigon} applies
verbatim, the projection bound being now an equality, and gives $\Rips(Y)\ge S/2$. If
$d_1=M>R$, let $\gamma^-$ and $\gamma^+$ be as in the proof of
\cref{lem:dominantbigon}, and let $p\in\gamma^-$ have $\pi_1(p)=\alpha_1(\tau)$ with
$R<\tau\le M$ and $\pi_i(p)=a_i$ for $i\ge2$. For $q\in\gamma^+$, either
$\pi_1(q)=a_1$ and $\rho(p,q)\ge\tau>R$, or $\pi_i(q)=b_i$ for all $i\ge2$ and
$\rho(p,q)\ge R$. Hence $\Rips(Y)\ge R=S-M$. The two lower bounds match the upper bound
$\min\{S/2,S-M\}$ in the respective regimes.
\end{proof}

\begin{corollary}\label{cor:halfunit}
For a product $X$ of finite nontrivial trees, $k\ge2$,
\[
 \Rips(X)-\Rips(|X|_1)=\begin{cases}0, & M\le R,\\ 1/2, & M>R,\end{cases}
 \quad \four(V(X))=\four(|X|_1).
\]
\end{corollary}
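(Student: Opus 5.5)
This corollary only needs a comparison of formulas already proved; nothing new about geodesics is required. I would first identify $|X|_1$ with the $\ell_1$-product of the geodesic realizations of the $T_i$. These realizations are compact $\R$-trees, and since diameters of trees are attained at vertices, their diameters are exactly the $d_i$. Then \cref{thm:l1} gives
\[
\Rips(|X|_1)=\min\{S/2,\,S-M\},\qquad \four(|X|_1)=\cP(d_1,\dots,d_k).
\]
For the vertex metric, \cref{cor:fourtrees} gives $\four(V(X))=\cP(d_1,\dots,d_k)$. This settles the second identity at once: both four-point constants equal the same partition value.

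For the first identity I would combine the formula above with \cref{thm:mainrips}, $\Rips(X)=\min\{S/2,R+\frac12\}$, and split into the two regimes exactly as in the proof of that theorem, using that $M$ and $R$ are integers. If $M\le R$, then $S/2\le R$, so both minima equal $S/2$ and the difference is $0$. If $M>R$, then $M\ge R+1$, so $S\ge 2R+1$. This gives $R<R+\frac12\le S/2$, hence the two minima are $R$ and $R+\frac12$, and the difference is $\frac12$.

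No step is a genuine obstacle. The only point needing a word of care is the identification of $|X|_1$ with the $\ell_1$-product of the factor realizations, which is how the paper defines $|X|_1$ in \cref{sec:l1}. One should also check that \cref{thm:l1} applies, which requires positive diameters; this holds because the trees are nontrivial.
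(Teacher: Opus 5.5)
Your proposal is correct and follows the paper's route: the corollary is read off by comparing \cref{thm:mainrips} and \cref{cor:fourtrees} with \cref{thm:l1}, applied to the geodesic realizations of the factors. You separate the two regimes by the same integrality argument, and your check that these realizations are compact $\R$-trees with positive diameters $d_i$ is exactly what \cref{thm:l1} requires.
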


Thus the two conventions differ, in the dominant regime, only because the cubes of
$|X|_1$ are not filled in $X$: the geodesics of $X$ cannot cut through the squares,
and the extremal point of \cref{lem:dominantbigon} is the midpoint of an edge. In the
balanced regime the discrepancy $\eta/2$ is of a different, arithmetic nature, and it
persists in $|X|_1$.

\section{Consequences}\label{sec:families}

\subsection{Powers, grids and hypercubes}

\begin{corollary}\label{cor:powers}
Let $T$ be a finite nontrivial tree with $d=\diam T$ and $k\ge2$. Then
\[
 \Rips(T^{\times k})=\frac{kd}{2},\quad
 \four(V(T^{\times k}))=d\Bigl\lfloor\frac k2\Bigr\rfloor .
\]
In particular, for the grid $P_s^{\times k}$, $s\ge2$, the two constants are
$\frac{k(s-1)}2$ and $(s-1)\lfloor\frac k2\rfloor$, and for the hypercube
$Q_k=P_2^{\times k}$ they are $\frac k2$ and $\lfloor\frac k2\rfloor$. For a product of
paths $P_{n_1}\times\cdots\times P_{n_k}$, Theorems A and B apply with
$d_i=n_i-1$.
\end{corollary}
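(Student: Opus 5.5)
This is a direct specialisation of \cref{thm:mainrips} and \cref{cor:fourtrees}, so the plan is only to compute the parameters. All $k$ factors have diameter $d\ge1$, hence
\[
 S=kd,\qquad M=d,\qquad R=S-M=(k-1)d .
\]
Since $k\ge2$, we have $M=d\le(k-1)d=R$. So we are in the balanced regime of \cref{thm:mainrips}, which gives $\Rips(T^{\times k})=S/2=kd/2$.

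For the four-point constant we apply \cref{cor:fourtrees} with $d_1=\dots=d_k=d$. A subset $E$ of size $m$ gives
\[
 \min\Bigl\{\sum_{i\in E}d_i,\sum_{i\notin E}d_i\Bigr\}=d\min\{m,k-m\}.
\]
The maximum over $m\in\{0,\dots,k\}$ is attained at $m=\lfloor k/2\rfloor$. Hence $\cP=d\lfloor k/2\rfloor$. Equivalently, in the language of \cref{thm:exactgap}, $\eta=0$ for $k$ even and $\eta=d$ for $k$ odd, and $(S-\eta)/2$ gives the same value.

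For the examples, the path $P_s$ with $s\ge2$ vertices is a nontrivial tree of diameter $s-1$. Substituting $d=s-1$ gives the grid constants $k(s-1)/2$ and $(s-1)\lfloor k/2\rfloor$. Taking $s=2$ gives the hypercube values $k/2$ and $\lfloor k/2\rfloor$.

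The last sentence of the statement just notes that $P_{n_1}\times\cdots\times P_{n_k}$ is a product of finite trees with $d_i=n_i-1$. Theorems~A and~B (\cref{thm:mainrips}, \cref{cor:fourtrees}) therefore apply directly; if some $n_i=1$, that factor is trivial and is simply dropped. There is no real obstacle; the only point to check is that $k\ge2$ places us in the regime $M\le R$.
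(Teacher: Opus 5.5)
Your proposal is correct and matches the paper's proof: equal diameters put you in the balanced regime $M=d\le(k-1)d=R$ of \cref{thm:mainrips}, and the best partition of $k$ equal diameters in \cref{cor:fourtrees} gives $d\lfloor k/2\rfloor$. The grid and hypercube cases are then direct substitutions; your aside about dropping factors with $n_i=1$ is harmless, provided at least two nontrivial factors remain so that \cref{thm:mainrips} still applies.
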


\begin{proof}
All diameters equal $d$, so $M=d\le(k-1)d=R$, and the largest subset sum not exceeding
$kd/2$ is $d\lfloor k/2\rfloor$.
\end{proof}

\subsection{Unbounded factors}

\begin{theorem}\label{thm:unbounded}
Let $T_1,\dots,T_k$ be nontrivial locally finite trees, $k\ge2$.
\begin{enumerate}[label=\textup{(\alph*)},leftmargin=*]
\item If at least two factors are unbounded, then $\Rips(X)=\four(V(X))=\infty$.
\item If exactly one factor is unbounded and $R$ is the sum of the diameters of the
others, then $\Rips(X)=R+\frac12$ and $\four(V(X))=R$.
\end{enumerate}
In particular $X$ is hyperbolic, for either convention, if and only if at most one
factor is unbounded.
\end{theorem}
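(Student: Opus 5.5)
The plan is to reduce to finite products by exhausting each factor with finite subtrees and applying \cref{lem:subproduct}(ii), then read off the constants from \cref{thm:mainrips} and \cref{cor:fourtrees}. Fix a base vertex $o_i$ of each $T_i$ and let $T_i^{(n)}$ be the ball of radius $n$ about $o_i$. These are finite subtrees because the trees are locally finite, they increase with $n$, and they exhaust $T_i$. Write $d_i^{(n)}=\diam T_i^{(n)}$. If $T_i$ is bounded, then $d_i^{(n)}=d_i$ for $n$ large. If $T_i$ is unbounded, then $d_i^{(n)}\to\infty$, since a locally finite unbounded tree contains vertices arbitrarily far from $o_i$. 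Each $T_i^{(n)}$ is nontrivial for $n\ge1$.

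For (a), suppose $T_1$ and $T_2$ are unbounded. At level $n$ we have $M^{(n)}\le R^{(n)}$ as soon as the second largest diameter and the others are large, but we do not need this case distinction. By \cref{thm:mainrips}, $\Rips(X^{(n)})\ge\min\{S^{(n)}/2,R^{(n)}+\tfrac12\}$. Here $R^{(n)}=S^{(n)}-M^{(n)}$ is at least the second largest of the $d_i^{(n)}$, which is at least $\min\{d_1^{(n)},d_2^{(n)}\}\to\infty$. For the four-point constant, take $E=\{1\}$: then $\cP(d^{(n)})\ge\min\{d_1^{(n)},d_2^{(n)}\}\to\infty$. Taking suprema via \cref{lem:subproduct}(ii) gives $\infty$ in both cases.

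For (b), let $T_1$ be the unbounded factor. For $n$ large, $d_i^{(n)}=d_i$ for all $i\ge2$ and $d_1^{(n)}\ge R+1$, so we are in the dominant regime with $R^{(n)}=R$. Then \cref{thm:mainrips} gives $\Rips(X^{(n)})=R+\tfrac12$, and \cref{thm:exactgap} gives $\four(V(X^{(n)}))=R$. Both sequences are eventually constant, and \cref{lem:subproduct}(ii) yields the claimed values. The final sentence follows. If no factor is unbounded, the constants are finite by the finite theorems. If exactly one factor is unbounded, (b) applies. If two or more are unbounded, (a) applies.

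The only point needing care is checking the hypotheses of \cref{lem:subproduct}(ii). The balls must be connected subtrees, which holds in a tree, and they must exhaust $T_i$. We also need the growth claim $d_1^{(n)}\to\infty$, which I expect to be immediate from unboundedness together with the fact that balls exhaust the tree. Everything else is direct substitution into the finite formulas.
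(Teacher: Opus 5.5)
Your proof is correct and is essentially the paper's. Part (b) is the same argument: exhaust the unbounded factor by balls, land in the dominant regime for large $n$, and apply \cref{lem:subproduct}(ii). The only difference is in (a). There the paper avoids estimating $R^{(n)}$ and $\cP$ by taking geodesic segments of length $n$ in two unbounded factors and single vertices in the others, so the sub-product is $P_{n+1}\times P_{n+1}$ with both constants equal to $n$ by \cref{cor:powers}; it then applies \cref{lem:subproduct}(i), an argument that does not even need local finiteness.
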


\begin{proof}
(a) If $T_1,T_2$ are unbounded, choose geodesic paths of length $n$ in them and single
vertices in the other factors. The sub-product is isomorphic to
$P_{n+1}\times P_{n+1}$, whose constants are both equal to $n$ by
\cref{cor:powers}; apply \cref{lem:subproduct}(i).

(b) Let $T_1$ be unbounded and $o\in V(T_1)$. The balls $T_1^{(n)}$ of radius $n$ about
$o$ are finite subtrees with union $T_1$ and $\diam T_1^{(n)}\ge n$. For $n>R$ the
product $T_1^{(n)}\times T_2\times\cdots\times T_k$ is in the dominant regime, so its
constants are $R+\frac12$ and $R$ by \cref{thm:mainrips,thm:exactgap}; apply
\cref{lem:subproduct}(ii).
\end{proof}

The non-hyperbolicity in case (a) also follows from the characterization of
hyperbolic Cartesian products in \cite{MichelRodriguezSigarretaVilleta2010}.

\subsection{Algorithmic consequences}\label{sec:algorithms}

A direct computation on $X$ is prohibitive, since $|V(X)|=\prod_i|V(T_i)|$ and the
general algorithm of \cite{FournierIsmailVigneron2015} for the four-point constant of
an $n$-point space runs in time $O(n^{3.69})$.

\begin{proposition}\label{prop:algorithm}
Let $T_1,\dots,T_k$ be finite nontrivial trees given by adjacency lists, $k\ge2$, and
$N=\sum_i|V(T_i)|$. Then $\Rips(X)$ can be computed in time $O(N)$, and
$\four(V(X))$, together with an extremal quadruple, in time $O(kN)$ and space
$O(N)$. If instead the diameters are given in binary, deciding whether
$\four(V(X))=S/2$ is NP-complete in the weak sense.
\end{proposition}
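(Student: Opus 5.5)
The plan is to reduce both computations to the diameter vector and then handle three claims separately: $\Rips$ in $O(N)$, $\four$ with a witness in $O(kN)$ time and $O(N)$ space, and weak NP-completeness of the decision problem.

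\emph{Computing the diameters.} First compute, for each $T_i$, its diameter $d_i$ together with a diametral pair $a_i,b_i$. Use the classical double breadth-first search: a BFS from an arbitrary vertex finds a farthest vertex $a_i$, and a BFS from $a_i$ finds a farthest vertex $b_i$ with $d_{T_i}(a_i,b_i)=d_i$. Each BFS is linear in $|V(T_i)|$, so all diameters cost $O(N)$. Then $S$, $M$ and $R=S-M$ take $O(k)\subseteq O(N)$ more time, and \cref{thm:mainrips} gives $\Rips(X)=\min\{S/2,R+\frac12\}$ in total time $O(N)$.

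\emph{Computing $\four$.} By \cref{cor:fourtrees}, $\four(V(X))=\cP(d_1,\dots,d_k)$, which is the largest subset sum of the $d_i$ not exceeding $\lfloor S/2\rfloor$. This is a subset-sum problem with integer weights whose total is $S=\sum_i d_i\le\sum_i(|V(T_i)|-1)<N$. The standard bitset or boolean dynamic program over items $i=1,\dots,k$ and targets $0,\dots,\lfloor S/2\rfloor$ runs in $O(kS)=O(kN)$ time.

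The main obstacle is recovering an optimal set $E$ within $O(N)$ space, since storing the full table needs $O(kS)$ space. The first thing to try is to store, for each reachable sum $\sigma$, the index of the item that first made $\sigma$ reachable (a ``first-item'' pointer). This is a single array of length $O(S)$. Backtracking then uses the invariant that $\sigma-d_{\mathrm{first}(\sigma)}$ was reachable using only items with smaller index, so the chain of pointers strictly decreases in index and yields distinct items. With $E$ in hand, the extremal quadruple is given by \eqref{eq:quadruple} in \cref{thm:fourpoint} with the pairs $a_i,b_i$, and it can be written down in $O(k)$ additional space.

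\emph{Weak NP-completeness.} Membership in NP holds because a subset $E$ certifies $\four(V(X))=S/2$; the test is $\sum_{i\in E}d_i=S/2$, which takes polynomial time in the binary input. Note that $\four(V(X))=S/2$ holds iff $\eta=0$, i.e.\ iff the diameters admit a perfect partition (\cref{thm:exactgap}). For hardness, reduce from \textsc{Partition} with positive integers $n_1,\dots,n_k$ by taking $d_i=n_i$. When the input consists of diameters given in binary, each tree may be taken to be the path $P_{n_i+1}$, presented implicitly by its diameter. The hardness is only weak, because the pseudo-polynomial algorithm above runs in time polynomial in $S$.
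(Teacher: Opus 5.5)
Your proposal is correct and follows the paper's proof: double breadth-first search for the diameters and diametral pairs, \cref{thm:mainrips} for $\Rips(X)$, the subset-sum dynamic program of \cref{cor:fourtrees} in time $O(kS)$ with $S<N$, the quadruple \eqref{eq:quadruple} as witness, and the identification with \textsc{Partition} for weak NP-completeness. Your first-item pointer array (with top-down updates) also correctly justifies backtracking an optimal set $E$ in $O(S)$ space, a step the paper only asserts.
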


\begin{proof}
Each diameter is found in linear time by two breadth-first searches, together with a
diametral pair; $\Rips(X)$ then follows from \cref{thm:mainrips}. By
\cref{cor:fourtrees}, $\four(V(X))$ is the largest subset sum of $d_1,\dots,d_k$ not
exceeding $S/2$, computed by the standard dynamic program in time $O(kS)$ and space
$O(S)$, with $S<N$; backtracking gives an optimal set $E$, and \eqref{eq:quadruple}
gives an extremal quadruple. Finally, $\four(V(X))=S/2$ if and only if
$d_1,\dots,d_k$ admit a perfect partition, which is the \textsc{Partition} problem
\cite{GareyJohnson1979}.
\end{proof}

For example, $P_{101}^{\times5}$ has $101^5=10\,510\,100\,501$ vertices, its factors
have $505$ vertices in total, and \cref{cor:powers} gives at once $\Rips=250$ and
$\four=200$. Extremal bigons are obtained in the same way from diametral paths of the
factors (\cref{lem:balancedbigon,lem:dominantbigon}).

\section{Examples and computational checks}\label{sec:numerics}

The results above are proved; the computations reported here are independent checks.
In \cref{tab:four}, the last column was obtained by evaluating the four-point defect
on all quadruples of vertices. In \cref{tab:rips}, the last column was obtained by
enumerating all geodesic triangles with vertices in the set of vertices and edge
midpoints, with all possible choices of the three geodesics, in the graph obtained by
subdividing each edge into two (and also into four for $P_3\times P_2$ and
$P_4\times P_2$), and computing the
largest distance from a point of one side to the union of the other two. Here
$K_{1,3}$ is the star with three leaves and $D_{2,2}$ the double star of
\cref{fig:branching}.

\begin{table}[ht]
\centering
\small
\caption{Four-point constants; $\eta$ is the partition imbalance.}
\label{tab:four}
\begin{tabular}{@{}lccccccc@{}}
\toprule
$X$ & $(d_i)$ & $|V(X)|$ & $\eta$ & $\Rips$ & $\four$ (formula) & gap &
$\four$ (exhaustive) \\
\midrule
$Q_3=P_2^{\times3}$ & $(1,1,1)$ & $8$ & $1$ & $3/2$ & $1$ & $1/2$ & $1$ \\
$P_3^{\times3}$ & $(2,2,2)$ & $27$ & $2$ & $3$ & $2$ & $1$ & $2$ \\
$P_5\times P_4\times P_4$ & $(4,3,3)$ & $80$ & $2$ & $5$ & $4$ & $1$ & $4$ \\
$P_9\times P_3\times P_2$ & $(8,2,1)$ & $54$ & $5$ & $7/2$ & $3$ & $1/2$ & $3$ \\
$P_4\times P_3\times P_3$ & $(3,2,2)$ & $36$ & $1$ & $7/2$ & $3$ & $1/2$ & $3$ \\
$D_{2,2}\times K_{1,3}\times P_3$ & $(3,2,2)$ & $72$ & $1$ & $7/2$ & $3$ & $1/2$ & $3$ \\
\bottomrule
\end{tabular}
\end{table}

\begin{table}[ht]
\centering
\small
\caption{Thin-triangle constants: formula of \cref{thm:mainrips} and exhaustive
computation.}
\label{tab:rips}
\begin{tabular}{@{}lccccc@{}}
\toprule
$X$ & $(d_i)$ & regime & $\Rips$ (formula) & $\Rips$ (exhaustive) \\
\midrule
$P_3\times P_2$ & $(2,1)$ & $M>R$ & $3/2$ & $3/2$ \\
$P_4\times P_2$ & $(3,1)$ & $M>R$ & $3/2$ & $3/2$ \\
$P_5\times P_2$ & $(4,1)$ & $M>R$ & $3/2$ & $3/2$ \\
$K_{1,3}\times P_2$ & $(2,1)$ & $M>R$ & $3/2$ & $3/2$ \\
$P_3\times P_3$ & $(2,2)$ & $M\le R$ & $2$ & $2$ \\
$Q_3$ & $(1,1,1)$ & $M\le R$ & $3/2$ & $3/2$ \\
$P_3\times P_2\times P_2$ & $(2,1,1)$ & $M\le R$ & $2$ & $2$ \\
$P_4\times P_2\times P_2$ & $(3,1,1)$ & $M=R+1$ & $5/2$ & $5/2$ \\
$P_5\times P_2\times P_2$ & $(4,1,1)$ & $M\ge R+2$ & $5/2$ & $5/2$ \\
\bottomrule
\end{tabular}
\end{table}

\begin{example}[Diameter vector $(4,3,3)$]\label{ex:433}
Let $X=P_5\times P_4\times P_4$, so $S=10$, $M=4\le R=6$ and $\Rips(X)=5$. The subset
sums of $(4,3,3)$ are $0,3,4,6,7,10$, so $\four(V(X))=4$, $\eta=2$ and the gap is
$1$. With $E=\{1\}$, the quadruple \eqref{eq:quadruple} has sums $(12,8,20)$ and
defect $4$. The geodesics $\gamma^+$ and $\gamma^-$ of \cref{lem:balancedbigon}
traverse the coordinates in the orders $(1,2,3)$ and $(3,2,1)$
(\cref{fig:balancedbox}); the midpoint of $\gamma^+$ is $p=(4,1,0)$ and
$d(p,\gamma^-)=5=\frac12\diam X$.
\end{example}

\begin{figure}[ht]
\centering
\begin{tikzpicture}[x={(1.05cm,0cm)},y={(0.52cm,0.31cm)},z={(0cm,0.85cm)},line cap=round,line join=round]
  \coordinate (A) at (0,0,0);
  \coordinate (X) at (4,0,0);
  \coordinate (Y) at (0,3,0);
  \coordinate (XY) at (4,3,0);
  \coordinate (Z) at (0,0,3);
  \coordinate (XZ) at (4,0,3);
  \coordinate (YZ) at (0,3,3);
  \coordinate (B) at (4,3,3);
  \coordinate (P) at (4,1,0);
  \draw[gray!65] (A)--(X)--(XY)--(Y)--cycle;
  \draw[gray!65] (Z)--(XZ)--(B)--(YZ)--cycle;
  \draw[gray!65] (A)--(Z) (X)--(XZ) (Y)--(YZ) (XY)--(B);
  \draw[very thick,red!70!black,-{Latex[length=2mm]}] (A)--(X)--(XY)--(B);
  \draw[very thick,blue!70!black,dashed,-{Latex[length=2mm]}] (A)--(Z)--(YZ)--(B);
  \fill (A) circle (1.7pt) node[below left] {$a$};
  \fill (B) circle (1.7pt) node[above right] {$b$};
  \fill[red!70!black] (P) circle (2.2pt) node[right] {$p=(4,1,0)$};
  \node[red!70!black,below] at (2.8,0,0) {$\gamma^+$};
  \node[blue!70!black,left] at (0,1.7,3) {$\gamma^-$};
  \draw[decorate,decoration={brace,mirror,amplitude=4pt}] (A)--(X) node[midway,below=5pt] {$4$};
  \node[gray!80!black] at ($(XZ)!0.5!(B)+(0.30,0.35)$) {$3$};
  \draw[decorate,decoration={brace,amplitude=4pt}] (XY)--(B) node[midway,right=5pt] {$3$};
\end{tikzpicture}
\caption{The balanced bigon for the diameter vector $(4,3,3)$ (schematic: the box
represents the chosen diametral paths of $P_5\times P_4\times P_4$). The marked
midpoint satisfies $d(p,\gamma^-)=5=S/2$.}
\label{fig:balancedbox}
\end{figure}

\begin{example}
The vector $(5,5,2,2)$ has $\eta=0$, so $\Rips=\four=7$: the two conventions can agree
for products of unequal factors, equality being controlled by the arithmetic of the
diameters and not by symmetry. For $P_9\times P_3\times P_2$ we have $M=8>R=3$, so
$\Rips=\frac72$, $\four=3$ and, for the $\ell_1$ cube complex, $\Rips(|X|_1)=3$; the
point of $\gamma^-$ with coordinates $(3.5,0,0)$ is at distance $\frac72$ from
$\gamma^+$ in $X$.
\end{example}

\section{Conclusions and outlook}

We have obtained a sharp and unified description of Gromov hyperbolicity for Cartesian products of finitely many trees. A central feature of the theory is that the two natural hyperbolicity constants are completely determined by the diameter vector of the factors, although through fundamentally different mechanisms. Writing
\[
S=\sum_{i=1}^k d_i,
\quad
M=\max_{1\leq i\leq k} d_i,
\]
the thin-triangle constant of the geodesic realization is
\[
\Rips(T_1\times\cdots\times T_k)
=
\min\left\{
\frac{S}{2},
S-M+\frac12
\right\}.
\]
Thus, for $k\geq3$, the geometry exhibits a genuine many-factor balancing phenomenon: the shorter factors may collectively balance the largest one and force the universal bound $\Rips(X)=\frac12\diam X$ to be attained. When one factor dominates, the remaining factors determine the constant, together with an exact half-edge correction coming from the $1$-skeleton geometry.

For the vertex metric, the sharp four-point constant is governed instead by an arithmetic optimization problem:
\[
\four(V(X))
=
\max_{E\subseteq[k]}
\min\left\{
\sum_{i\in E}d_i,
\sum_{i\notin E}d_i
\right\}.
\]
This extends the known multidimensional-grid formula from path factors to arbitrary trees and shows that branching is invisible to both sharp constants. Consequently, trees with the same diameter can be substituted for one another in any factor without changing either hyperbolicity invariant.

The exact comparison between the two conventions reveals a further structural phenomenon. If $\eta$ denotes the minimum imbalance of a two-way partition of the diameter vector, then
\[
\Rips(X)-\four(V(X))
=
\begin{cases}
\eta/2, & M\leq S-M,\\[1mm]
1/2, & M>S-M.
\end{cases}
\]
Hence the discrepancy between thin-triangle and four-point hyperbolicity is controlled either by the arithmetic of the diameter partition or by a purely geometric half-edge effect. Passing from the graph $X$ to the associated $\ell_1$ cube complex removes exactly this half-unit contribution:
\[
\Rips(|X|_1)
=
\min\left\{
\frac{S}{2},
S-M
\right\},
\quad
\four(|X|_1)
=
\four(V(X)).
\]
This identifies precisely which part of the discrepancy is caused by the discrete $1$-skeleton and which part is intrinsic to the diameter vector.

Beyond the finite case, the same framework yields a complete hyperbolicity criterion for locally finite products: the product is hyperbolic precisely when at most one factor is unbounded. The sharp formulas also lead to efficient algorithms that operate on the factors themselves rather than on the exponentially larger Cartesian product. In particular, $\Rips(X)$ is determined by the two scalar quantities $(S,M)$, while the computation of $\four(V(X))$ reduces exactly to a classical subset-sum problem on the factor diameters.

Taken together, these results provide a sharp geometric, arithmetic, and algorithmic description of hyperbolicity in products of trees. They show that the complexity of the full Cartesian product collapses to a small collection of diameter data, while still preserving a nontrivial distinction between geodesic and four-point hyperbolicity. This combination of exact formulas, extremal constructions, metric interpretation, and computational consequences provides a natural basis for extending the theory to weighted tree products, more general median structures, and products involving non-tree hyperbolic factors.

\section*{Declarations}

\subsection*{Ethical Approval}
Not applicable.

\subsection*{Consent to Participate}
Not applicable.

\subsection*{Consent to Publish}
Not applicable.

\subsection*{Data Availability Statement}
The experimental archive supporting the findings of this study, comprising the
per-run logs of the 50 independent executions per instance, the recorded budget
traces, the paired Wilcoxon comparisons, the archived best tours, and the tuning
driver, is available from the corresponding author upon reasonable request. All
benchmark instances are taken from the public TSPLIB and Waterloo National TSP
libraries cited in the text.

\subsection*{Author Contributions}
Conceptualization, J.-M. and A.S.-L.; methodology, J.-M. and A.S.-L.; software,  J.-M., A.S.-L. and JM. -V.; validation, J.M., A.S.-L., and JM. -V.; formal analysis, J.-M., A.S.-L.; investigation,J.-M. and A.S.-L. and JM. -V.; resources, J.-M. and A.S.-L.; data curation, J.-M., A.S.-L. and JM. -V.; writing--original draft preparation, J.-M., A.S.-L. and JM. -V.; writing--review and editing, J.-M.; visualization, A.S.-L. and J.M; supervision, J.-M., A.S.-L. and JM. -V.; project administration, J.-M. and A.S.-L. All authors have read and agreed to the published version of the manuscript.

\subsection*{Funding}
This research received no external funding.

\subsection*{Competing Interests}
The authors declare that they have no competing interests.

\end{document}